\def\a{\alpha}
\def\Z{\mathbb{Z}}
\def\N{\mathbb{N}}
\def\C{\mathbb{C}}
\numberwithin{equation}{section}
\newtheorem{theo}{Theorem}[section]
\newtheorem{defi}[theo]{Definition}
\newtheorem{coro}[theo]{Corollary}
\newtheorem{lemm}[theo]{Lemma}
\newtheorem{clai}{Claim}
\newtheorem{rema}[theo]{Remark}
\newtheorem{exam}{Example}
\begin{document}

\title[Tensor product weight modules over the affine-Virasoro algebra]{Tensor product weight modules over the affine-Virasoro algebra}

\author{Qiu-Fan Chen}

\address{Chen: Department of Mathematics, Shanghai Maritime University,
 Shanghai, 201306, China.}\email{chenqf@shmtu.edu.cn}

\author{Yu-Feng Yao}

\address{Yao: Department of Mathematics, Shanghai Maritime University,
 Shanghai, 201306, China.}\email{yfyao@shmtu.edu.cn}

\subjclass[2010]{17B10, 17B65, 17B67, 17B68}

\keywords{affine-Virasoro algebra, tensor product, highest weight module, loop module}

\thanks{This work is supported by National Natural Science Foundation of China (Grant Nos. 12271345  and 12071136).}

\begin{abstract}
In this paper, we study the tensor products of irreducible highest weight modules with irreducible loop modules over the affine-Virasoro algebra with aid of the ``shifting technique" established for the Virasoro algebra in \cite{CGZ}.
All such tensor product modules are indecomposable modules with infinite-dimensional weight spaces. Moreover, we obtain the necessary and sufficient conditions for such tensor product modules to be irreducible. Therefore, we obtain a class of new irreducible weight modules over  the affine-Virasoro algebra. Finally, the necessary and sufficient conditions for any two such tensor product modules to be isomorphic are also determined.
\end{abstract}

\maketitle

\setcounter{tocdepth}{1}\tableofcontents
\begin{center}
\end{center}

\section{Introduction}
Throughout the paper, we denote by $\C ,\,\Z,\,\C^*,\,\Z_+,\,\N$ the sets of complex numbers, integers, nonzero complex numbers, nonnegative integers and positive integers, respectively. All algebras (modules, vector spaces) are assumed to be  over $\C$. For a Lie algebra $\mathfrak{g}$, we use $U(\mathfrak{g})$ to denote the universal enveloping algebra of $\mathfrak{g}$. Let $\C[t^{\pm1}]$ and  $\C[t]$ be the algebras of Laurent polynomials and of polynomials in one indeterminate $t$, respectively.

Let $\mathfrak{g}$ be a finite-dimensional simple Lie algebra equipped with the Killing form $(\cdot\mid\cdot)$. Associated to the pair $(\mathfrak{g},(\cdot\mid\cdot))$, we have the corresponding untwisted affine Kac-Moody Lie algebras $\tilde{\mathfrak{g}}$. Affine Lie algebras are the most useful ones among infinite-dimensional Kac-Moody Lie algebras, so that they are extensively studied, such as representation theory \cite{K1}, vertex operator algebra theory \cite{LL} and conformal field theory \cite{FMS}. It is worth noting that their representation theory is as rich as but quite different to that of finite-dimensional simple Lie algebras. Denote by $\mathcal{V}$ the Virasoro algebra, which is a central extension of the Lie algebra of complex vector fields on the circle. The algebra $\mathcal{V}$ is one of the most important Lie algebras both in mathematics and in mathematical physics, see for example \cite{IK,KR} and references therein. The Virasoro algebra acts on any (except when the level is negative the dual coxeter number) highest weight module of the affine Lie algebra through the use of the famous Sugawara operators.  Conversely, the affine Lie algebras admit representations on the Fock space and hence admit representations of the Virasoro algebra. This close
relationship strongly suggests that they should be considered simultaneously, i.e., as one algebraic structure, and hence has led to the definition of the so-called affine-Virasoro algebra $\mathfrak{L(g)}$. It is known that in literature this algebra is also named the conformal current algebra \cite{CK,K}, the entire gauge algebra \cite{EFK}. The physical context in which the affine-Virasoro algebra appears
is a two-dimensional conformal field theory on the circle with an internal symmetry algebra. In particular, the even part of  the $N=3$ superconformal algebra \cite{CL} is just $\mathfrak{L(sl_2)}$.
We know that physical applications of algebraic structure usually appear through representations. In recent years, the representation theory of affine-Virasoro algebras generally captures the attention of people. Highest weight representations, integrable  representations and $U(\mathfrak{h})$-free representations of the affine-Virasoro algebras have been  extensively studied  (cf.~\cite{B,CH,EJ,JY,LQ,XH}. Especially, in \cite{LPX}, the authors  gave a complete classification of irreducible quasi-finite $\mathfrak{L(g)}$-modules. It is shown that they are all highest weight modules, lowest weight modules and the loop modules.
Recently, the author classified a class of irreducible $\mathfrak{L(g)}$-modules on which each weight vector of the positive parts acts locally finitely in \cite{T}.

It is well-known that an important way to construct new modules over an algebra is to consider the linear tensor product of known modules
over the algebra (cf.~ \cite{CGZ,CHSY,CY1,CY2,GLW,TZ1,TZ2,Z}). In the paper \cite{Z}, Zhang considered the tensor products of irreducible highest weight modules with irreducible intermediate series modules, and he provided some sufficient conditions for the tensor products to be irreducible. It was even not known in \cite{Z} whether there was a non-simple tensor product when the highest weight module is not a Verma module until \cite{CGZ}. By using the so-called ``shifting technique" and Feigin-Fuchs' theorem, the authors \cite{CGZ} gave the necessary and sufficient conditions for such tensor products to be irreducible. The idea was exploited and generalized to consider such tensor products over infinite-dimensional Lie algebras, such as the twisted Heisenberg-Virasoro algebra \cite{LZ}, the mirror Heisenberg-Virasoro algebra \cite{GZ1}, the Neveu-Schwarz algebra \cite{ZX}, the Schr\"{o}dinger-Virasoro algebra \cite{LZX} and the untwisted affine Kac-Moody algebra \cite{GZ}. The aim of this paper is to study the tensor product modules of irreducible highest weight modules with irreducible loop modules over the affine-Virasoro algebra $\mathfrak{L(g)}$.

This paper is organized as follows. In Section 2, we recall notations related to the affine-Virasoro algebra $\mathfrak{L(g)}$ and collect some known results on highest weight modules and loop modules over $\mathfrak{L(g)}$. We also introduce the ``shifting technique" for later use.  In Section 3, we prove that the tensor product modules of irreducible highest weight modules with irreducible loop modules over $\mathfrak{L(g)}$ are indecomposable.  Section 4 is devoted to determining the necessary and sufficient conditions for such tensor product modules to be irreducible. In order to illustrate our main theorem, some examples are given. Consequently, we obtain a class of new irreducible weight $\mathfrak{L(g)}$-modules with infinite-dimensional weight spaces. In Section 5, we show that two such tensor product modules are isomorphic if and only if the corresponding highest weight modules and loop modules are isomorphic, respectively.
\section{Notations and preliminaries}\label{pre}
In this section we recall some notations and collect some known results related to the affine-Virasoro algebra.

Let $\mathfrak{g}$ be a complex simple finite-dimensional Lie algebra with a Cartan subalgebra $\mathfrak{h}$. Then $\mathfrak{g}$ has a root space decomposition $\mathfrak{g}=\mathfrak{h}\oplus(\oplus_{\a\in\Delta}\mathfrak{g}_{\a})$, where $\Delta$ is the root system determined by  $\mathfrak{h}$, and $\mathfrak{g}_{\a}$ is the root space corresponding to the root $\a\in\Delta$ with ${\rm dim\,}\mathfrak{g}_{\a}=1$. Denote by $\Delta_{+}$ and $\Delta_{-}$  the set of positive roots and negative roots, respectively. Then $\Delta_{-}=-\Delta_{+}$ and $\Delta=\Delta_{+}\cup \Delta_{-}$. Let $\mathfrak{n}_{\pm}=\oplus_{\a\in\Delta_{\pm}}\mathfrak{g}_{\a}$. Clearly, $\mathfrak{g}$ admits a triangular decomposition $\mathfrak{g}=\mathfrak{n}_{-}\oplus\mathfrak{h}\oplus\mathfrak{n}_{+}$ in the sense of \cite{MP}. Let
 $\Lambda^{+}$ denote the set of the integral dominant weights of  $\mathfrak{g}$, and $(\cdot\mid\cdot)$  the Killing form on $\mathfrak{g}$ normalized by the condition $(\theta\mid \theta)=2$, where $\theta\in \Delta_{+}$ is the highest root. Let $\rho$ be the half-sum of positive roots of $\mathfrak{g}$. For $\lambda\in\mathfrak{h}^*$, denote $c_{\lambda}=(\lambda+2\rho\mid \lambda)$ for simplicity, which is the eigenvalue of the Casimir operator acts on the simple $\mathfrak{g}$-module with highest weight $\lambda$. Especially, the number $g=\frac{1}{2}c_{\theta}$ is called the dual Coxeter number and it is a positive integer.

The {\em  untwisted affine Kac-Moody Lie algebras $\tilde{\mathfrak{g}}$} is a central extension of the loop algebra $\mathfrak{g}\otimes\C[t^{\pm1}]$
defined by the following  Lie brackets:
$$[x\otimes t^m, y\otimes t^n]=[x,y]\otimes t^{m+n}+m(x\mid y)\delta_{m+n,0}\mathbf{k},\quad [\mathbf{k}, \tilde{\mathfrak{g}}]=0,\ \ x,y\in\mathfrak{g}, m,n\in\Z.$$
For convenience, we will denote  $x\otimes t^m$ by $x(m)$ for any $x\in\mathfrak{g}$ and $m\in\Z$. Particularly, for $m=0$, we identify $x(0)$ with $x$. Denote by $\tilde{\mathfrak{n}}_{\pm}=\mathfrak{n}_{\pm}\oplus (\mathfrak{g}\otimes t^{\pm1}\C[t^{\pm1}])$. Then $\tilde{\mathfrak{g}}$ has a structure of triangular decomposition $\tilde{\mathfrak{g}}=\tilde{\mathfrak{n}}_{-}\oplus\tilde{\mathfrak{h}}\oplus\tilde{\mathfrak{n}}_{+}$, where $\tilde{\mathfrak{h}}=\mathfrak{h}+\C\mathbf{k}$.

The {\em Virasoro algebra $\mathcal{V}$} is a central extension of the Lie algebra ${\rm Der\,}\C[t^{\pm1}]$ with the following  Lie brackets:
$$[d_m, d_n]=(n-m)d_{m+n}+\delta_{m+n,0}\frac{m^3-m}{12}\mathbf{c},\quad [\mathbf{c}, \mathcal{V}]=0,\ \ m,n\in\Z,$$
where $d_m=t^{m+1}\frac{d}{dt}$. Let $\mathcal{V}_{\pm}=\oplus_{\pm n\in\N}\C d_n$ and $\mathcal{V}_0=\C d_0+\C \mathbf{c}$ (a Cartan subalgebra of $\mathcal{V}$), then $\mathcal{V}$ carries a structure of triangular decomposition $\mathcal{V}=\mathcal{V}_{-}\oplus\mathcal{V}_{0}\oplus\mathcal{V}_{+}$.
\begin{defi}(cf. \cite{K}) \label{defi2.1}
\rm
Let $\mathfrak{g}$ be a finite-dimensional simple Lie algebra. The affine-Virasoro Lie algebra corresponding to  $\mathfrak{g}$ is the algebra
$$\mathfrak{L(g)}=\tilde{\mathfrak{g}}\rtimes \mathcal{V}=(\oplus_{n\in\Z}\mathfrak{g}(n))\oplus (\oplus_{n\in\Z}\C d_n)\oplus \C\mathbf{k}\oplus\C\mathbf{c}$$ and subject to the following Lie brackets:
\begin{equation*}
\aligned
&[x(m), y(n)]=[x,y](m+n)+m(x\mid y)\delta_{m+n,0}\mathbf{k},\\
&[d_m, d_n]=(n-m)d_{m+n}+\delta_{m+n,0}\frac{m^3-m}{12}\mathbf{c},\\
&[d_n,x(m)]=mx(m+n),\\
&[\mathfrak{L(g)},\C\mathbf{k}+\C\mathbf{c}]=0,\quad \forall m,n\in\Z, x,y\in\mathfrak{g}.
\endaligned
\end{equation*}
\end{defi}
By definition, it is easy to see that $\mathcal{H}:=\mathcal{V}_0+\tilde{\mathfrak{h}}$ is the Cartan subalgebra of $\mathfrak{L(g)}$.
Then $\mathfrak{L(g)}$ admits a triangular decomposition $$\mathfrak{L(g)}=\mathfrak{L(g)}_{-}\oplus\mathcal{H}\oplus\mathfrak{L(g)}_{+},$$ where $\mathfrak{L(g)}_{\pm}=\mathcal{V}_{\pm}+\tilde{\mathfrak{n}}_{\pm}$.
In addition, $\mathfrak{L(g)}$ has a $\Z$-gradation with respect to the adjoint action of $d_0$:
$$\mathfrak{L(g)}=\oplus_{n\in\Z}\mathfrak{L(g)}_{n},\quad [\mathfrak{L(g)}_{n}, \mathfrak{L(g)}_{m}]\subset\mathfrak{L(g)}_{n+m},\,\,m,n\in\Z,$$
where $\mathfrak{L(g)}_{0}=\mathfrak{g}+\C d_0+\C\mathbf{k}+\C\mathbf{c}$ and $\mathfrak{L(g)}_{n}=\mathfrak{g}(n)+\C d_n$ for $n\neq0$. A nonzero element $x$ in $U(\mathfrak{L(g)})$ is called homogeneous if $[d_0, x]=mx$ for some $m\in\Z$, and $m$ is called the {\em degree} of $x$, denoted by ${\rm deg \,}(x)=m$. Let $U(\mathfrak{L(g)})_m$ be the subspace generated by all elements of degree $m$.
\begin{defi} \label{defi2.3}\rm Keep notations as before.
\begin{itemize}
\item[(1)] A module $V$ over $\mathfrak{L(g)}$ is called a {\em weight module} with respect to $\mathcal{H}$ if $V=\oplus_{\lambda\in\mathcal{H}^{*}}V_{\lambda}$, where
$$V_{\lambda}=\{v\in V\mid hv=\lambda(h)v,\,\,\forall h\in \mathcal{H}\}.$$
\item[(2)] If a weight $\mathfrak{L(g)}$-module $V$ is generated by a one-dimensional $\mathcal{H}\oplus\mathfrak{L(g)}_{+}$-module on which $\mathfrak{L(g)}_{+}$ acts trivially, then $V$ is called a {\em  highest weight module}.
\end{itemize}
\end{defi}

Given $\lambda\in\mathfrak{h}^*$  and $l, k, c\in\C$, we define a one-dimensional $\mathcal{H}\oplus\mathfrak{L(g)}_{+}$-module $\C u$ by
\begin{eqnarray*}
&h u=\lambda(h)u,\ \  \forall h\in\mathfrak{h},\\
&d_0 u=lu,\ \ \ \ \mathbf{k}u=ku,\ \ \ \ \mathbf{c} u=cu,\\
&\mathfrak{g}_{\a} u=0, \ \  \forall \a\in\Delta_+,\\
&x(n)u=d_nu=0,\ \ \forall x\in\mathfrak{g}, n\in\N.
\end{eqnarray*}
The Verma module over $\mathfrak{L(g)}$ is defined by
$$M(\lambda, l, k, c)=U(\mathfrak{L(g)})\otimes_{U(\mathcal{H}\oplus\mathfrak{L(g)}_{+})}\C u.$$
One knows that $M(\lambda, l, k, c)$ has a unique maximal submodule $J(\lambda, l, k, c)$, and the corresponding irreducible quotient module is denoted by $L(\lambda, l, k, c)$. We will use $u, \bar{u}$ to denote the highest weight vector of  $M(\lambda, l, k, c)$ and $L(\lambda, l, k, c)$, respectively. A nonzero weight vector $v\in M(\lambda, l, k, c)$ is called a singular vector if $\mathfrak{L(g)}_{+}v=0$.

Similarly, we shall denote by $M_{\mathcal{V}}(l, c)$ the Verma module over the
Virasoro algebra $\mathcal{V}$, by $M_{\tilde{\mathfrak{g}}}(\lambda, k)$ the Verma module over the untwisted affine Kac-Moody Lie algebras $\tilde{\mathfrak{g}}$. Let $u_{l,c}, u_{\lambda,k}$ be the respective highest weight vectors of  $M_{\mathcal{V}}(l, c), M_{\tilde{\mathfrak{g}}}(\lambda, k)$.  Let $L_{\tilde{\mathfrak{g}}}(\lambda, k), L_{\mathcal{V}}(l, c)$ denote the corresponding irreducible quotient modules of  $M_{\tilde{\mathfrak{g}}}(\lambda, k)$ and  $M_{\mathcal{V}}(l, c)$, respectively. A $\mathcal{V}$-module $M_{\mathcal{V}}(l, c)$ can be viewed as an $\mathfrak{L(g)}$-module by defining trivial  $\tilde{\mathfrak{g}}$-action and we denote by $M_{\mathcal{V}}^{\mathfrak{L(g)}}(l, c)$ the resulting $\mathfrak{L(g)}$-module. Take a basis $\{x_i\mid 1\leq i\leq \dim\mathfrak{g}\}$ and its dual basis $\{y_i\mid 1\leq i\leq \dim\mathfrak{g}\}$ of $\mathfrak{g}$ with respect to the Killing form $(\cdot\mid\cdot)$, i.e., $(x_i\mid y_j)=\delta_{i,j}$ for any $i, j$. For any $k\in\C, \lambda\in\mathfrak{h}^*$,  we define the linear
operators
$$T_{n}=-\frac{1}{2}\sum_{j\in\Z}\sum_{i}:x_i(-j)y_i(j+n):$$
on the $\tilde{\mathfrak{g}}$-module $M_{\tilde{\mathfrak{g}}}(\lambda, k)$,
where
$$:x_i(-j)y_i(j+n):\stackrel{\bigtriangleup}{=}
\begin{cases}
x_i(-j)y_i(j+n), &\text{if}\,\,-j\leq j+n;\cr
y_i(j+n)x_i(-j), &\text{if}\,\,-j>j+n.
\end{cases}$$
It follows from \cite{K} that
\begin{align*}
&[T_m, T_n]=(n-m)(k+g)T_{m+n}+\delta_{m+n,0}\frac{m^3-m}{12}({\rm dim \,}\mathfrak{g})(k+g)k,\\
&[T_n, x(m)]=(k+g)mx(m+n),\ \ \ \ m,n\in\Z.
\end{align*}
For the case $k\neq -g$, $M_{\tilde{\mathfrak{g}}}(\lambda, k)$ becomes an $\mathfrak{L(g)}$-module by the following Sugawara operators
\begin{align*}
&d_m\mapsto \frac{1}{k+g}T_m, \ \ \ \forall m\in\Z,\\
&x(n)\mapsto x(n), \ \ \ \forall n\in\Z,\\
&\mathbf{c}\mapsto \frac{k({\rm dim \,}\mathfrak{g})}{k+g},\\
&\mathbf{k}\mapsto k.
\end{align*}
We denote by $M_{\tilde{\mathfrak{g}}}^{\mathfrak{L(g)}}(\lambda, k)$ the resulting $\mathfrak{L(g)}$-module.

%\subsection{Loop module}
Let $L(\mu)$ be the finite dimensional irreducible $\mathfrak{g}$-module with the highest weight $\mu\in\Lambda^{+}$ and  the associated highest weight vector $v_{\mu}$. Now we recall from \cite{EJ} the  irreducible loop modules $L_{a,b}(\mu)=L(\mu)\otimes \C[t^{\pm1}]$ over $\mathfrak{L(g)}$ for $a,b\in\C$. The action of  $\mathfrak{L(g)}$ on the module $L_{a,b}(\mu)$ is as follows
\begin{eqnarray*}
&d_m(v\otimes t^n)=(a+bm+n)v\otimes t^{m+n},\\
&x(m)(v\otimes t^n)=(xv)\otimes t^{m+n},\\
&\mathbf{k}(v\otimes t^n)= \mathbf{c}(v\otimes t^n)=0,\ \ \ \ \forall m,n\in\Z, x\in\mathfrak{g}, v\in L(\mu).
\end{eqnarray*}
Notice that $L_{a,b}(\mu)\cong L_{a+n,b}(\mu)$ for all $n\in\Z$. To avoid repetition, throughout the paper, we always assume that $0\leq{\rm Re\,}a<1$, where ${\rm Re\,}a$ is the real part of $a$. It has been shown in \cite{LPX} that  any irreducible  $\mathfrak{L(g)}$-module with finite-dimensional weight spaces with respect to $d_0$ is  a highest weight module, a lowest weight module, or isomorphic to some $L_{a,b}(\mu)$.

%Suppose that $V$ is a finite-dimensional $\mathfrak{g}$-module. It is well known that $T_{a,b}(V)$ is irreducible if and only if $V$ is a non-trivial irreducible module or $a\not\in\Z$ or $b\not\in\{0,1\}$ \cite{EJ}. We assume that $V=L(\mu)$ is an irreducible $\mathfrak{g}$-module with the highest weight $\mu\in\Lambda^{+}$ and  the associated highest weight vector $v_{\mu}$.  Denote by $L_{a,b}(\mu)$  the irreducible submodule or subquotient of $T_{a,b}(L(\mu))$.

Form the tensor product $\mathfrak{L(g)}$-module $L(\lambda, l, k, c)\otimes L_{a,b}(\mu)$, which is a weight module and all weight spaces are infinite-dimensional,
\begin{equation}\label{nnmz}\big(L(\lambda, l, k, c)\otimes L_{a,b}(\mu)\big)_{a+l+n}=\sum_{i\in\Z_+}\big(L(\lambda, l, k, c)_{l-i}\otimes (L(\mu)\otimes t^{n+i})\big).
\end{equation}
Now we use the ``shifting technique". Define a new action of  $\mathfrak{L(g)}$ on the vector space $L_{\lambda, l, k, c}^{\mu, a,b}=L(\lambda, l, k, c)\otimes L(\mu)\otimes \C[t^{\pm1}]$ by
\begin{eqnarray}\label{equ1}
&d_m(P\bar{u}\otimes v\otimes t^n)=(d_m+a+n-{\rm deg\,}(P)+bm)P\bar{u}\otimes v\otimes t^{m+n},\nonumber\\
&x(m)(P\bar{u}\otimes v\otimes t^n)=(x(m)P\bar{u}\otimes v+P\bar{u}\otimes xv)\otimes t^{m+n},\nonumber\\
&\mathbf{k}(P\bar{u}\otimes v\otimes t^n)=k(P\bar{u}\otimes v\otimes t^n),\ \ \ \  \mathbf{c}(P\bar{u}\otimes v\otimes t^n)=c(P\bar{u}\otimes v\otimes t^n),
\end{eqnarray}
where $v\in L(\mu), x\in \mathfrak{g},m, n\in\Z$ and homogeneous element $P\in U(\mathfrak{L(g)}_{-})$ .
%Taking $m=0$ in the first formula gives
%$$d_0(Pu\otimes v\otimes t^n)=(a+l+n)(Pu\otimes v\otimes t^n).$$
It is not hard to check that  $L_{\lambda, l, k, c}^{\mu, a,b}$ is an $\mathfrak{L(g)}$-module and $L(\lambda, l, k, c)\otimes L_{a,b}(\mu)\cong L_{\lambda, l, k, c}^{\mu, a,b}$ via the following map:
\begin{equation}\label{equ21}L(\lambda, l, k, c)\otimes L_{a,b}(\mu)\longrightarrow L_{\lambda, l, k, c}^{\mu, a,b},\ \ \ \  P\bar{u}\otimes v\otimes t^n\longmapsto P\bar{u}\otimes v\otimes t^{n+{\rm deg\,}(P)}\end{equation}
for any $n\in\Z$ and homogeneous element $P\in  U(\mathfrak{L(g)}_{-})$.

Henceforth we will consider the module $L_{\lambda, l, k, c}^{\mu, a,b}$ instead of $L(\lambda, l, k, c)\otimes L_{a,b}(\mu)$. Clearly,  $L_{\lambda, l, k, c}^{\mu, a,b}$ is a weight module. The key advantage of the  ``shifting technique" of the notation $L_{\lambda, l, k, c}^{\mu, a,b}=\oplus_{n\in\Z}L(\lambda, l, k, c)\otimes L(\mu)\otimes t^n$ is the weight space decomposition with respect to $d_0$, i.e.,
$$L(\lambda, l, k, c)\otimes L(\mu)\otimes t^n=\{x\in L_{\lambda, l, k, c}^{\mu, a,b}\mid d_0x=(a+l+n)x\}, \ \ \ \forall n\in\Z,$$
which differs from \eqref{nnmz}.
\section{Indecomposable weight modules}
\begin{lemm}\label{lemma1}
Keep notations as before, then as an $\mathfrak{L(g)}$-module, $L_{\lambda, l, k, c}^{\mu, a,b}$ is generated by $\{\bar{u}\otimes v_{\mu}\otimes t^m\mid m\in\Z\}$.
\end{lemm}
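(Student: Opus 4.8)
The plan is to show that the $\mathfrak{L(g)}$-submodule $W$ of $L_{\lambda, l, k, c}^{\mu, a,b}$ generated by $\{\bar u\otimes v_\mu\otimes t^m\mid m\in\Z\}$ is the whole module. Recall that $L(\lambda, l, k, c)=U(\mathfrak{L(g)}_{-})\bar u=\bigoplus_{d\in\Z_+}L(\lambda, l, k, c)_{l-d}$ is $d_0$-graded, and that the vectors $P\bar u\otimes v\otimes t^n$ with $P\in U(\mathfrak{L(g)}_{-})$ homogeneous of degree $-d$, $v\in L(\mu)$, $n\in\Z$ span $L_{\lambda, l, k, c}^{\mu, a,b}$. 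Hence it suffices to prove, by induction on $d\in\Z_+$ (which I will call the \emph{depth}), that $L(\lambda, l, k, c)_{l-d}\otimes L(\mu)\otimes\C[t^{\pm1}]\subseteq W$.

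Two mechanisms drive the argument. The first is a \emph{propagation} step along the degree-$0$ subalgebra $\mathfrak{n}_{-}\subseteq\mathfrak{L(g)}_{0}$: for $x\in\mathfrak{n}_{-}$ the shifting formula \eqref{equ1} with $m=0$ reads $x(w'\otimes v\otimes t^n)=xw'\otimes v\otimes t^n+w'\otimes xv\otimes t^n$, so if both $w'\otimes v\otimes t^n$ and $w'\otimes xv\otimes t^n$ lie in $W$, then so does $xw'\otimes v\otimes t^n$. Iterating on the number of factors from $\mathfrak{n}_{-}$ (the correction terms $w'\otimes xv\otimes t^n$ staying within the range of the hypothesis because $xv\in L(\mu)$), one gets: if $X\subseteq L(\lambda, l, k, c)_{l-d}$ satisfies $X\otimes L(\mu)\otimes\C[t^{\pm1}]\subseteq W$, then $\big(U(\mathfrak{n}_{-})X\big)\otimes L(\mu)\otimes\C[t^{\pm1}]\subseteq W$. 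The second mechanism produces a \emph{seed} subspace $X$ inside each depth. For $d=0$: since $x(m)\in\mathfrak{L(g)}_{+}$ annihilates $\bar u$ for $m\ge 1$, formula \eqref{equ1} gives $x(m)(\bar u\otimes v\otimes t^n)=\bar u\otimes xv\otimes t^{m+n}$; starting from $\bar u\otimes v_\mu\otimes t^n\in W$ and applying such operators repeatedly, using $L(\mu)=U(\mathfrak{g})v_\mu$ and that $m+n$ exhausts $\Z$ as $n$ does, we obtain $\bar u\otimes L(\mu)\otimes\C[t^{\pm1}]\subseteq W$. Propagation with $X=\C\bar u$ and the identity $L(\lambda, l, k, c)_{l}=U(\mathfrak{n}_{-})\bar u$ then yield the base case $L(\lambda, l, k, c)_{l}\otimes L(\mu)\otimes\C[t^{\pm1}]\subseteq W$.

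For the inductive step at depth $d\ge 1$, assume the claim at all smaller depths. Applying $x(-m)$ ($m\ge 1$) to $P'\bar u\otimes v\otimes t^n$ with $P'$ homogeneous of degree $-(d-m)$ — an element of $W$ by the induction hypothesis — and moving the correction term $P'\bar u\otimes xv\otimes t^{n-m}$ (depth $d-m$, already in $W$) to the other side of \eqref{equ1}, one gets $x(-m)P'\bar u\otimes v\otimes t^{n-m}\in W$ for all $v\in L(\mu)$ and $n\in\Z$; the analogous computation with $d_{-m}$ in place of $x(-m)$ (now subtracting the scalar multiple $(a+n-\deg P'-bm)P'\bar u\otimes v\otimes t^{n-m}$, again of depth $d-m$) takes care of the $d_{-m}$-action. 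Thus $T_d:=\sum_{m\ge 1}\big(\mathfrak{g}(-m)+\C d_{-m}\big)L(\lambda, l, k, c)_{l-(d-m)}\subseteq L(\lambda, l, k, c)_{l-d}$ satisfies $T_d\otimes L(\mu)\otimes\C[t^{\pm1}]\subseteq W$. Finally, since $\mathfrak{n}_{-}$ has $d_0$-degree $0$ while $\bigoplus_{n\le -1}\mathfrak{L(g)}_{n}$ is an ideal of $\mathfrak{L(g)}_{-}$, the PBW theorem gives $U(\mathfrak{L(g)}_{-})=U(\mathfrak{n}_{-})\,U\big(\bigoplus_{n\le -1}\mathfrak{L(g)}_{n}\big)$, from which $L(\lambda, l, k, c)_{l-d}=U(\mathfrak{n}_{-})T_d$; applying the propagation step with $X=T_d$ completes the induction, and hence $W=L_{\lambda, l, k, c}^{\mu, a,b}$.

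The only real subtlety is the degree-$0$ part $\mathfrak{n}_{-}$ of $\mathfrak{L(g)}_{-}$, which does not lower the depth and so cannot be absorbed into the outer induction; it is handled by the propagation step, with the identities $L(\lambda, l, k, c)_{l-d}=U(\mathfrak{n}_{-})T_d$ (and $L(\lambda, l, k, c)_{l}=U(\mathfrak{n}_{-})\bar u$ for $d=0$) supplying the seed that propagation requires. Everything else is direct bookkeeping with the formulas \eqref{equ1}, the guiding point being that the extra terms coming from the $\mathfrak{g}$-action on the $L(\mu)$-factor always have strictly smaller depth (in the inductive step) or are again of the form $\bar u\otimes(\,\cdot\,)\otimes t^{\ast}$ (in the base case), hence fall under an instance already established.
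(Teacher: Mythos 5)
Your proof is correct and follows essentially the same route as the paper: both arguments use the shifted action formulas \eqref{equ1} to build up the whole module from $\{\bar u\otimes v_{\mu}\otimes t^m\}$ by applying lowering operators, with the correction terms $(\cdot)\otimes xv\otimes t^{\ast}$ and the scalar terms from $d_{-m}$ absorbed by previously established cases. The only difference is organizational: you make the induction explicit by $d_0$-depth using the factorization $U(\mathfrak{L(g)}_{-})=U(\mathfrak{n}_{-})U\big(\oplus_{n\le-1}\mathfrak{L(g)}_{n}\big)$, whereas the paper first treats $U(\mathcal{V}_{-})\bar u$ and then sketches the remaining induction over $\mathfrak{g}(-n)$ and $\mathfrak{n}_{-}$.
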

\begin{proof}
One can observe that $L_{\lambda, l, k, c}^{\mu, a,b}$ is spanned by $\{x \bar{u}\otimes y v_{\mu}\otimes t^m\mid x\in U(\mathfrak{L(g)}_{-}), y\in U(\mathfrak{g}_{-}), m\in\Z\}$. Let $M$ be the $\mathfrak{L(g)}$-module generated by  $\{\bar{u}\otimes v_{\mu}\otimes t^m\mid m\in\Z\}$. We need to show that $M=L_{\lambda, l, k, c}^{\mu, a,b}$. It suffices to prove that
$$y\bar{u}\otimes L(\mu)\otimes t^m\in M,\,\,\forall\,y\in U(\mathfrak{L(g)}_{-}), m\in\mathbb{Z}.$$

Using \eqref{equ1}, we have
$$\mathfrak{g}_{\a}(n)(\bar{u}\otimes v_{\mu}\otimes t^m)=\bar{u}\otimes \mathfrak{g}_{\a}v_{\mu}\otimes t^{m+n},\ \ \ \ \forall m\in\Z, n\in\N,\a\in\Delta_-,$$
from which one can inductively show that $\{u\otimes L(\mu)\otimes t^m\mid m\in\Z \}\subset M$. This along with
$$d_{-n}(\bar{u}\otimes v\otimes t^m)=(d_{-n}+a+m-bn)\bar{u}\otimes v\otimes t^{m-n},\ \ \ \ \forall m\in\Z, n\in\N,v\in L(\mu)$$
gives rise to $\{d_{-n}\bar{u}\otimes L(\mu)\otimes t^{m}\mid m\in\Z, n\in\N\}\subset M$.
Inductively, we get
\begin{equation}\label{equ2}
\{U(\mathcal{V}_{-})\bar{u}\otimes L(\mu)\otimes t^{m}\mid m\in\Z\}\subset M.
\end{equation}
For any $m\in\Z$, $y\in U(\mathfrak{L(g)}_{-}),v\in L(\mu)$, we have
$$x(-n)(y\bar{u}\otimes v\otimes t^m)=(x(-n)y\bar{u}\otimes v+y\bar{u}\otimes xv)\otimes t^{m-n}$$
for any $n\in\N, x\in\mathfrak{g}$ or $n=0,x\in\mathfrak{n}_-$.
Combining these with \eqref{equ2}, one can show the lemma holds  by induction.
\end{proof}
The following theorem shows that any $L_{\lambda, l, k, c}^{\mu, a,b}$ is an indecomposable $\mathfrak{L(g)}$-module provided that $\mu\neq 0$.
\begin{theo}\label{prop1}
Keep notations as before, and in addition we assume that $\mu\neq 0$, then
${\rm End\,}_{\mathfrak{L(g)}}(L_{\lambda, l, k, c}^{\mu, a,b})\cong\C.$
\end{theo}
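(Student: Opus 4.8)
The plan is to show that every $\varphi\in\mathrm{End}_{\mathfrak{L(g)}}(L_{\lambda, l, k, c}^{\mu, a,b})$ is a scalar, pinning $\varphi$ down first along the Virasoro direction and then along $\mathfrak{g}$. Write $V:=L_{\lambda, l, k, c}^{\mu, a,b}$ and, for $m\in\Z$, $W_m:=L(\lambda, l, k, c)\otimes L(\mu)\otimes t^m$, the $d_0$-eigenspace of eigenvalue $a+l+m$. Since $\varphi$ commutes with $\mathcal{H}$ it preserves every weight space, so in particular each $W_m$ and each joint $\mathfrak{h}$-eigenspace inside it. Because $[d_n,x(0)]=0$ in $\mathfrak{L(g)}$, each $d_n$ restricts to a $\mathfrak{g}(0)$-module map $W_m\to W_{m+n}$, and $x(0)$ preserves $W_m$; hence $\varphi|_{W_m}$ is a $\mathfrak{g}$-module endomorphism of $W_m\cong L(\lambda, l, k, c)\otimes L(\mu)$ commuting with all the connecting maps $d_n$ and $x(n)$, $n\neq 0$. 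By Lemma~\ref{lemma1} it is therefore enough to determine $\varphi(\bar{u}\otimes v_\mu\otimes t^m)$ for all $m\in\Z$.

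For the Virasoro step I would forget $\tilde{\mathfrak{g}}$ and view $V|_{\mathcal{V}}$ as the (shifted) tensor product of the $\Z$-graded, bounded-above $\mathcal{V}$-module $L(\lambda, l, k, c)|_{\mathcal{V}}$ (with highest weight vector $\bar{u}$) with an intermediate series module $\C[t^{\pm1}]$, where $L(\mu)$ plays the role of an inert multiplicity space, and then re-run the CGZ ``shifting technique'' computation. Since $\bar{u}\otimes v_\mu\otimes t^m$ spans the one-dimensional highest $\mathfrak{h}$-weight space of the top-$d_0$-degree part $L(\lambda, l, k, c)_l\otimes L(\mu)\otimes t^m$ of $W_m$ and $\varphi$ preserves $\mathfrak{h}$-weights, the top-degree part of $\varphi(\bar{u}\otimes v_\mu\otimes t^m)$ must be $\lambda_m\,\bar{u}\otimes v_\mu\otimes t^m$ for a scalar $\lambda_m$. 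The work is then twofold: (i) kill the lower-$d_0$-degree terms of $\varphi(\bar{u}\otimes v_\mu\otimes t^m)$, which I would do by induction along the degree filtration of $L(\lambda, l, k, c)$, using $\varphi d_n=d_n\varphi$, the identities $d_n(\bar{u}\otimes v_\mu\otimes t^m)=(a+m+bn)\bar{u}\otimes v_\mu\otimes t^{m+n}$ for $n\ge1$ and $\mathfrak{g}_\alpha(r)(\bar{u}\otimes v_\mu\otimes t^m)=0$ for $\alpha\in\Delta_{+}$, $r\ge0$, and the fact that $L(\lambda, l, k, c)$ is an irreducible graded $\mathfrak{L(g)}$-module generated by its top component (so a surviving lower-degree contribution would produce a nonzero homomorphism into a proper graded subquotient); and (ii) show that $\lambda_m$ does not depend on $m$, which is where $\tilde{\mathfrak{g}}$ re-enters: from $x(n)(\bar{u}\otimes v_\mu\otimes t^m)=\bar{u}\otimes x v_\mu\otimes t^{m+n}$ for $n\ge1$, together with the fact that for some simple root $\alpha$ (one exists because $\mu\in\Lambda^{+}$ is nonzero) one has $x_{-\alpha}v_\mu\neq 0$ and $x_\alpha(0)(\bar{u}\otimes x_{-\alpha}v_\mu\otimes t^{m})$ is a nonzero multiple of $\bar{u}\otimes v_\mu\otimes t^{m}$, one links all the $t$-degrees inside the top layer $\bigoplus_m\bar{u}\otimes L(\mu)\otimes t^m$ and obtains $\varphi(\bar{u}\otimes w\otimes t^m)=\lambda_0\,\bar{u}\otimes w\otimes t^m$ for every $w\in L(\mu)$ and all $m\in\Z$.

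By Lemma~\ref{lemma1} this means that $\varphi$ agrees with $\lambda_0\,\mathrm{id}$ on a generating set of $V$, hence $\varphi=\lambda_0\,\mathrm{id}$ and $\mathrm{End}_{\mathfrak{L(g)}}(V)\cong\C$. I expect the genuine obstacle to lie in step (i): the peeling along the degree filtration has to be carried out with infinite-dimensional weight spaces, so no dimension count is available, and since $L(\lambda, l, k, c)$ is in general neither irreducible nor even generated by $\bar{u}$ as a $\mathcal{V}$-module, the Virasoro result of \cite{CGZ} cannot be quoted verbatim but must be reproved in this $\Z$-graded setting. The hypothesis $\mu\neq 0$ is essential precisely in step (ii): for $\mu=0$ the top layer is one-dimensional in each $t$-degree, $\tilde{\mathfrak{g}}$ can no longer link the layers, and $V$ may fail to be indecomposable.
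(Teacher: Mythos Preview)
Your overall plan---reduce to showing $\varphi(\bar u\otimes v_\mu\otimes t^m)=\lambda_m\,\bar u\otimes v_\mu\otimes t^m$ and then prove the $\lambda_m$ are independent of $m$---matches the paper's. The gap is in your step~(i), and your instinct that this is where the real difficulty lies is correct. Restricting to the Virasoro action and treating $L(\mu)$ as an inert multiplicity space does not obviously succeed: $L(\lambda,l,k,c)|_{\mathcal V}$ is neither irreducible nor highest weight as a $\mathcal V$-module, so the CGZ argument has no leverage, and your ``induction along the degree filtration'' with a ``proper graded subquotient'' heuristic is too vague to evaluate---there is no evident way to read off a module map from a surviving lower-degree term. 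Purely Virasoro combinations such as $(a+m+b(n_1+n_2))d_{n_1}d_{n_2}-(a+m+n_2+bn_1)(a+m+bn_2)d_{n_1+n_2}$, applied to the expansion $\sum_iP_i\bar u\otimes v_i\otimes t^m$, leave a residual coefficient $(\deg P_i)\big(b(b-1)n_1n_2-(a+m+b(n_1+n_2))^2-(\deg P_i)(a+m+b(n_1+n_2))\big)$ that does \emph{not} force $\deg P_i=0$ in the degenerate cases $b\in\{0,1\}$.

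The paper does not forget $\tilde{\mathfrak{g}}$ at this stage; instead it uses a single mixed operator. For $n$ large and any $y\in\mathfrak{g}$, the element $(a+m+n+bn)\,y(2n)-d_n\,y(n)$ annihilates $\bar u\otimes v_\mu\otimes t^m$, while on $\sum_i P_i\bar u\otimes v_i\otimes t^m$ it yields exactly $\sum_i(\deg P_i)\,P_i\bar u\otimes yv_i\otimes t^{m+2n}$. Since $\mu\neq 0$ means no nonzero $v_i$ is killed by all of $\mathfrak{g}$, linear independence of the $P_i\bar u$ forces $\deg P_i=0$ in one stroke. Thus the paper spends the hypothesis $\mu\neq 0$ already in step~(i), not only in the linking step. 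Your step~(ii), linking the $\lambda_m$ via $x_{-\alpha}(n)$ for $n\ge 1$ together with $x_\alpha(0)$, is a legitimate alternative to the paper's $d_n$-based argument and in fact avoids the case split on $b$; but it only becomes available once step~(i) is settled.
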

\begin{proof}
Let $\eta\in {\rm End\,}_{\mathfrak{L(g)}}(L_{\lambda, l, k, c}^{\mu, a,b})$. Take any but fixed $m\in\Z$. Note that $\eta(\bar{u}\otimes  v_{\mu}\otimes t^m)$ and $\bar{u}\otimes v_{\mu}\otimes t^m$ have the same weight with respect to $d_0$, we can write
$$\eta(\bar{u}\otimes v_{\mu}\otimes t^m)=\sum_{i=1}^r P_i\bar{u}\otimes v_i\otimes t^m$$
with $0\neq v_i\in L(\mu)$ for $i=1,\cdots, r$, and homogeneous elements $P_1,\ldots, P_r\in U(\mathfrak{L(g)}_{-})$ such that $0\geq {\rm deg\,}P_1\geq \cdots \geq {\rm deg\,}P_r$ and $P_1u,\ldots, P_ru$ are linearly independent. The nontriviality of $L(\mu)$ ensures that $xv_1\neq0$ for some $x\in\mathfrak{g}$. Let us take $n\in\N$ sufficiently large. It follows from direct calculation that $((a+m+n+bn)y(2n)-d_ny(n))(\bar{u}\otimes v_{\mu}\otimes t^m)=0$ for any $y\in\mathfrak{g}$. This gives
\begin{align*}
0&=\eta(((a+m+n+bn)y(2n)-d_ny(n))(\bar{u}\otimes  v_{\mu}\otimes t^m))\\
&=((a+m+n+bn)y(2n)-d_ny(n))(\eta(\bar{u}\otimes  v_{\mu}\otimes t^m))\\
&=((a+m+n+bn)y(2n)-d_ny(n))(\sum_{i=1}^r P_i\bar{u}\otimes v_i\otimes t^m)\\
&=\sum_{i=1}^r ({\rm deg\,}P_i) P_i\bar{u}\otimes yv_i\otimes t^{m+2n},
\end{align*}
which together with the assumption $\mu\neq 0$ yields that ${\rm deg\,}P_i=0$ for all $i=1,\ldots, r$. Therefore,
\begin{align*}
\eta(\bar{u}\otimes v_{\mu}\otimes t^m)=\sum_{i=1}^r P_i\bar{u}\otimes v_i\otimes t^m\ \ \ \ {\rm with}\,\ {\rm deg\,}P_i=0, i=1,\ldots,r,\end{align*}
%For any $n\in\N$ with $a+m+nb\neq0$, the above along with $T(d_{n}(u\otimes  v_{\mu}\otimes t^m))=d_{n}(T(u\otimes  v_{\mu}\otimes t^m))$
%gives rise to
%\begin{align}\label{equ112}
%T(u\otimes v_{\mu}\otimes t^{m+n})=\sum_{i=1}^r P_iu\otimes v_i\otimes t^{m+n} \ \ \ \ {\rm with}\,\ {\rm deg\,}P_i=0, i=1,\ldots,r.
%\end{align}
which combined with
$$\eta(\mathfrak{g}_{\a}(n)(\bar{u}\otimes  v_{\mu}\otimes t^m))=\mathfrak{g}_{\a}(n)(\eta(\bar{u}\otimes  v_{\mu}\otimes t^m)),\ \ \ \forall \a\in\Delta_+, n\in\N$$
gives  $\sum_{i=1}^r P_i\bar{u}\otimes \mathfrak{g}_{\a}v_i\otimes t^{m+n}=0$. In view of the linear independence of $P_i\bar{u}, i=1,\ldots, r$, we obtain
\begin{align*}
\mathfrak{g}_{\a}v_i=0, \ \ \ \ i=1,\ldots,r, \,\forall \a\in\Delta_+.
\end{align*}
%Combining \eqref{equ111}, \eqref{equ112} with
%$$T(h(n)(u\otimes  v_{\mu}\otimes t^m))=h(n)(T(u\otimes  v_{\mu}\otimes t^m)),\ \ \ \ \forall h\in\mathfrak{h}, n\in\N\,\,{\rm with}\,\, a+m+nb\neq0,$$
%we know that $hv_i=\mu(h)v_i, i=1,\ldots,r$ for any $h\in\mathfrak{h}$, which  together with \eqref{equ113} gives $r=1$ and $v_{1}=c_1v_{\mu}$ for some $c_1\in\C$.
That is,  $\eta(\bar{u}\otimes v_{\mu}\otimes t^{m})=P\bar{u}\otimes v_{\mu}\otimes t^{m}$ with ${\rm deg\,}P=0$.
Since $\eta(h(\bar{u}\otimes  v_{\mu}\otimes t^m))=h(\eta(\bar{u}\otimes  v_{\mu}\otimes t^m))$, we have $hP\bar{u}=\lambda(h)P\bar{u}$ for any $h\in\mathfrak{h}$. This implies that $P\bar{u}\in\C \bar{u}$. Hence, there exist $c_m\in\C$ such that
$$\eta(\bar{u}\otimes  v_{\mu}\otimes t^m)=c_m(\bar{u}\otimes  v_{\mu}\otimes t^m),\ \ \ \ \forall m\in\Z.$$
It suffices to show that $c_m$  are complex constants independent of $m$ for $m\in\mathbb{Z}$. For any $n\in\N$, we have
\begin{align}\label{equation123}
&(a+m+bn)c_{m+n}\bar{u}\otimes v_{\mu}\otimes t^{m+n}\cr
=&\eta(d_n(\bar{u}\otimes v_{\mu}\otimes t^m))\cr
=&d_n\eta(\bar{u}\otimes v_{\mu}\otimes t^m)\cr
=&(a+m+bn)c_m\bar{u}\otimes v_{\mu}\otimes t^{m+n}.
\end{align}
%First consider the case $(a,b)\neq(0,0)$. In this case, for any $q\in\Z$, by taking a sufficiently large $n$ such that
%$$a+m+bn\neq0\quad {\rm and}\quad a+q+b(n-q+m)\neq0, $$
%one can deduce from \eqref{equation123} that $c_{m+n}=c_m$ and then $c_q=c_{(n-q+m)+q}=c_m$. In the remaining case $(a,b)=(0,0)$, similar arguments as above yields that
%\begin{align}\label{equation1234}
%c_p=c_q,\ \ \ \ \ \forall p,q\neq0.
%\end{align}
%Now take any $p\in\N$. It follows from
%\begin{align*}
%&pc_{0}u\otimes v_{\mu}\otimes t^{0}=\phi(d_p(u\otimes v_{\mu}\otimes t^{-p}))\nonumber\\
%&\ \ \ \ =d_p\phi(u\otimes v_{\mu}\otimes t^{-p})=pc_{-p}u\otimes v_{\mu}\otimes t^{0}
%\end{align*}
%that $c_0=c_{-p}$, which along with \eqref{equation1234} means that $c_m=c$ for all $m\in\Z$.
We divided that following discussion into two cases.

Case 1: $b\neq 0$.

Fix $m\in\mathbb{Z}$. There is at most one positive integer $n_0$ such that $a+m+bn_0=0$. It follows from (\ref{equation123}) that $c_m=c_{m+n}$ for any positive integer $n\neq n_0$. We can replace $m$ with $m+n_0$, so that $c_{m+n_0}=c_{m+n_0+n}$ for any positive integer $n\neq n_1$. This implies that $c_m=c_{m+n}$ for any $n\in\mathbb{N}$. Since $m\in\mathbb{Z}$ is arbitrary, we have $c_m=c_p$ for any $m, p\in\mathbb{Z}$.

Case 2: $b=0$.

If $a\neq 0$, then $a+m\neq 0$.  It follows from (\ref{equation123}) that $c_m=c_{m+n}$ for any $m\in\mathbb{Z}, n\in\mathbb{N}$. This implies that $c_m=c_p$ for any $m, p\in\mathbb{Z}$. If $a=0$,  It follows from (\ref{equation123}) that $c_m=c_{m+n}$ for any $0\neq m\in\mathbb{Z}, n\in\mathbb{N}$, which also yields that $c_m=c_p$ for any $m, p\in\mathbb{Z}$.

In conclusion, $c_m=c$ for all $m\in\mathbb{Z}$. Consequently, the desired result follows directly from Lemma \ref{lemma1}.
\end{proof}

\section{Irreducibility of tensor product weight modules}
In this section, we always assume $0\neq\mu\in\Lambda^+$. We shall obtain a necessary and sufficient condition for the tensor product modules $L_{\lambda, l, k, c}^{\mu, a,b}$ over $\mathfrak{L(g)}$ to be irreducible. We first give the following crucial lemma.
\begin{lemm}\label{lemma2}
For any nonzero $\mathfrak{L(g)}$-submodule $W$ of $L_{\lambda, l, k, c}^{\mu, a,b}$,  we have $L(\lambda, l, k, c)\otimes L(\mu)\otimes t^n\subseteq W$ for all sufficiently large $n\in\Z$.
\end{lemm}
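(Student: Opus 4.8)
The plan is to start from an arbitrary nonzero weight vector $w\in W$, lying in some weight space $L(\lambda,l,k,c)\otimes L(\mu)\otimes t^N$, and to show that by applying suitable elements of $U(\mathfrak{L(g)}_+)$ we can push $w$ up to produce the whole of $L(\lambda,l,k,c)\otimes L(\mu)\otimes t^n$ for large $n$. Write $w=\sum_{i=1}^r P_i\bar u\otimes v_i\otimes t^N$ with $0\neq v_i\in L(\mu)$, the $P_i\in U(\mathfrak{L(g)}_-)$ homogeneous, $0\geq\deg P_1\geq\cdots\geq\deg P_r$, and $P_1\bar u,\dots,P_r\bar u$ linearly independent; among all such presentations pick $w$ so that $\deg P_1$ is as large as possible (i.e. closest to $0$), and secondarily $r$ minimal. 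The first step is to reduce to the case $\deg P_i=0$ for all $i$: exactly as in the proof of Theorem~\ref{prop1}, the operator $(a+N+n+bn)y(2n)-d_ny(n)$ annihilates $\bar u\otimes v_\mu\otimes t^N$ for large $n$ and acts on $P_i\bar u\otimes v_i\otimes t^N$ by the scalar $\deg P_i$ tensored with $y$; applying it and using $\mu\neq 0$ shows $W$ contains a vector with all $P_i$ of strictly larger degree unless every $\deg P_i=0$ already. By the maximality of $\deg P_1$ (and an induction downward on $-\deg P_1$, which is bounded since degrees are $\le 0$) we may therefore assume $w=\sum_i c_i\bar u\otimes v_i\otimes t^N$ with $c_i\in\C^*$, i.e. $w\in \bar u\otimes L(\mu)\otimes t^N$.

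The second step is to promote $\bar u\otimes L(\mu)\otimes t^N$-membership to the statement that $W$ contains $\bar u\otimes v_\mu\otimes t^n$ for some large $n$. Since $L(\mu)$ is an irreducible finite-dimensional $\mathfrak g$-module, $U(\mathfrak n_+)$ acting on $\sum c_i v_i$ (equivalently, acting on $w$ via the middle-tensor action $x(m)(\bar u\otimes v\otimes t^n)=\bar u\otimes xv\otimes t^{m+n}+x(m)\bar u\otimes v\otimes t^{m+n}$, but taken with $m\in\N$ so the $x(m)\bar u$ term can be controlled) reaches a nonzero multiple of $v_\mu$. Concretely: apply root vectors $\mathfrak g_\alpha(n)$ with $\alpha\in\Delta_+$ and $n\in\N$ large; the term $\mathfrak g_\alpha(n)\bar u=0$ vanishes because $n>0$, so $\mathfrak g_\alpha(n)(\bar u\otimes v\otimes t^N)=\bar u\otimes\mathfrak g_\alpha v\otimes t^{N+n}$. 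Iterating, $W$ contains $\bar u\otimes v_\mu\otimes t^{n'}$ for some $n'$ which we can make as large as we wish (each application of $\mathfrak g_\alpha(n)$ with $n$ large increases the $t$-degree).

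The third step is to generate everything from that single highest-type vector $\bar u\otimes v_\mu\otimes t^{n'}$. Acting by $\mathfrak g_\alpha(n)$ with $\alpha\in\Delta_-$, $n\in\N$ gives $\bar u\otimes\mathfrak g_\alpha v_\mu\otimes t^{n'+n}$, and iterating recovers $\bar u\otimes L(\mu)\otimes t^{m}$ for all $m\geq n'+1$ (here one must take a basis of $L(\mu)$ built from lowering operators applied to $v_\mu$ and check that the $t$-degree shifts can be arranged to land on any prescribed $m$ in the allowed range — a bookkeeping point, not a real difficulty, since we have positive-mode freedom too). Then acting by $x(-n)$, $n\in\N$, and by $d_{-n}$, $n\in\N$, as in the proof of Lemma~\ref{lemma1}, builds up $U(\mathfrak{L(g)}_-)\bar u\otimes L(\mu)\otimes t^m=L(\lambda,l,k,c)\otimes L(\mu)\otimes t^m$; the only subtlety is that lowering the $t$-degree by $n$ each time might overshoot past the large-$n$ regime, so one should instead first raise to a sufficiently large $n'$ and only then apply the $\mathfrak{L(g)}_-$-generation, so that the resulting weight spaces $L(\lambda,l,k,c)\otimes L(\mu)\otimes t^n$ we produce are all for $n$ large. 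Running this for all large $n$ (raising $n'$ further as needed) gives the claim.

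The main obstacle is the first step — extracting from an arbitrary submodule element a vector supported on $\bar u\otimes L(\mu)$. The degree-reduction argument via $(a+N+n+bn)y(2n)-d_ny(n)$ handles the $\mathcal V_-$- and $\mathfrak g\otimes t^{-}\C[t^{-}]$-directions, but one must be careful that after the reduction the vector is still nonzero and still a genuine weight vector in a single $t$-homogeneous space; tracking the linear independence of the $P_i\bar u$ through the operations, and confirming the induction on $-\deg P_1$ actually terminates (using $\deg P_i\le 0$), is where the care is needed. Everything after that is the Lemma~\ref{lemma1} generation argument run with an eye on keeping $t$-degrees large.
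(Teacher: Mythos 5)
Your steps 2 and 3 (raising the $L(\mu)$-component to $v_{\mu}$ with $\mathfrak{g}_{\a}(n)$, $\a\in\Delta_+$, $n\gg0$, and then regenerating everything by the Lemma \ref{lemma1} argument while keeping the $t$-degree large) are sound and match the paper. The genuine gap is in your first step. In the proof of Theorem \ref{prop1} the operator $(a+N+n+bn)y(2n)-d_ny(n)$ forces ${\rm deg\,}P_i=0$ only because it is applied through the homomorphism $\eta$ to an element that is already zero, so the output $\sum_i({\rm deg\,}P_i)P_i\bar{u}\otimes yv_i\otimes t^{N+2n}$ must vanish. Here you are applying the same operator to a nonzero $w\in W$, and all you obtain is that this output is another element of $W$. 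That element is supported precisely on the components with ${\rm deg\,}P_i<0$ (the degree-zero components are killed, the negative-degree ones survive with the same $P_i$), so the operator neither raises any degree nor forces degrees to be zero; in the generic case where every ${\rm deg\,}P_i<0$ it returns a vector with exactly the same $P_i$'s and your induction on $-{\rm deg\,}P_1$ never gets started. There is no mechanism in your proposal for eliminating the negative-degree components of the $L(\lambda,l,k,c)$-factor, and that is the actual content of the lemma.

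The paper closes this gap with two ideas you are missing. First it normalizes the $L(\mu)$-side \emph{before} touching the $L(\lambda,l,k,c)$-side, producing $\alpha\otimes v_{\mu}$ and then, using $\dim L(\mu)<\infty$ and $x(m)$ with $m\gg 0$, the whole family $\alpha\otimes L(\mu)\otimes t^{p}\subseteq W$ for all large $p$. Second, writing $\alpha=\sum_{i=1}^{r}Q_i\bar{u}$ with the $Q_i$ homogeneous of pairwise distinct degrees and $r$ minimal among all $\alpha$ admitting such a family, it applies the combinations $(a+p-{\rm deg\,}Q_j+b(m+n))d_md_n-(a+p+nb-{\rm deg\,}Q_j)(a+p+n+mb-{\rm deg\,}Q_j)d_{m+n}$; the resulting coefficients contain the factor $({\rm deg\,}Q_j-{\rm deg\,}Q_i)$, which separates components of distinct degrees and contradicts minimality of $r$ unless $r=1$, i.e.\ $\alpha=Q_1\bar{u}$ is homogeneous (of possibly negative degree, not degree zero). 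Only then does irreducibility of $L(\lambda,l,k,c)$ enter: one picks a homogeneous $x\in U(\mathfrak{L(g)})_{-{\rm deg\,}Q_1}$ with $xQ_1\bar{u}=\bar{u}$ and applies it to the whole family $Q_1\bar{u}\otimes L(\mu)\otimes t^{n}$ to reach $\bar{u}\otimes L(\mu)\otimes t^{n}$. Your proposal uses neither the degree-separation argument nor the irreducibility of the highest weight module, and without them the reduction to $\bar{u}\otimes L(\mu)\otimes t^{N}$ does not go through.
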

\begin{proof}
Denote $L=L_{\lambda, l, k, c}^{\mu, a,b}$ and $L_n=L(\lambda, l, k, c)\otimes L(\mu)\otimes t^n$ for convenience. %And elements in $L_n$ are called homogeneous.
Since $L$ is a weight module, there exist subspaces $W_n\subseteq L(\lambda, l, k, c)\otimes L(\mu)$ for all $n\in\Z$ such that $W=\oplus_{n\in\Z}W_n\otimes t^n$. Choose any $n'$ such that $W_{n'}\neq0$ and take a nonzero $w\in W_{n'}$. Write
\begin{align}\label{cbn}w=\sum_{i=1}^r P_i\bar{u}\otimes v_i\end{align}
with $0\neq v_i\in L(\mu)$ for $i=1,\cdots, r$, and homogeneous elements $P_1,\ldots, P_r\in U(\mathfrak{L(g)}_{-})$ such that $0\geq {\rm deg\,}P_1\geq \cdots \geq {\rm deg\,}P_r$ and $P_1\bar{u},\ldots, P_r\bar{u}$ are linearly independent. Obviously, there exists $q\in\Z_+$ such that $\mathfrak{g}(m)(P_i\bar{u})=0$ and $d_m(P_i\bar{u})=0, m\geq q$ for $i=1,\ldots,r$.
%Recall that $L(\mu)$ is an irreducible $\mathfrak{g}$-module with the highest weight $\mu\in\Lambda^{+}$ and for any weight $\eta,\varepsilon\in\mathfrak{h}^*$, we say $\varepsilon\preceq \eta$ if $\eta-\varepsilon$ is the sum of nonnegative roots. For any element $\tilde{v}=\tilde{v}_1+\cdots+\tilde{v}_s\in L(\mu)$ such that $\tilde{v}_i's$ are nonzero weight vectors of distinct weights, denote ${\rm wt\,}(\tilde{v})={\rm min\,}\{{\rm wt\,}(\tilde{v}_i), i=1,\ldots,s\}$.
If there is $v_i$, without loss of generality we assume $v_1$, in the expression \eqref{cbn} so that $v_1\notin \mathbb{C} \bar{u}$, then $v_1$ can not be annihilated by the whole positive part of $\mathfrak{g}$, that is, $xv_1\neq0$ for some $x\in\mathfrak{n}_{+}$. Then we have
$$x(q)(w\otimes t^{n'})=x(q)(\sum_{i=1}^r P_i\bar{u}\otimes v_i\otimes t^{n'})=\sum_{i=1}^rP_i\bar{u}\otimes xv_i\otimes t^{n'+q},$$
which is nonzero by the linear independence of $P_i\bar{u}, i=1,\ldots,r$. Repeating this process, we can obtain a nonzero element $w'=\sum_{i=1}^{r} P_i\bar{u}\otimes v_i'\in W_{n''}$ with either $0\neq v_i'\in\mathbb{C}\bar{u}$ or $v_i'=0$ for any $i=1,\ldots,r$. Then  $w'=\alpha \otimes v_{\mu}\in W_{n''}$ for some nonzero $\alpha\in L(\lambda, k, c, l)$. In fact, $\alpha$ is a linear combination of $P_1\bar{u},\ldots, P_r\bar{u}$. By replacing $w$ with $w'$ and $n'$ with $n''$, we  may assume that $w=\alpha\otimes v_{\mu}\in W_{n'}$.
\begin{clai}
$Q_1\bar{u}\otimes L(\mu)\otimes t^n\subseteq W$ for all sufficiently large $n$, where  $Q_1$ is a homogeneous element in $U(\mathfrak{L(g)}_{-})$.
\end{clai}
Since  $L(\mu)$ is a finite-dimensional $\mathfrak{g}$-module with the highest weight vector $v_{\mu}$, $L(\mu)$ has a basis consisting of only finitely many vectors in the form $X_1v_{\mu},\ldots, X_sv_{\mu}$, where $X_i\in U(\mathfrak{n}_{-})$. Combining this with
$$x(m)(\alpha\otimes v_{\mu}\otimes t^{n'})=\alpha\otimes xv_{\mu}\otimes t^{n'+m},\ \ \ \ \forall m\geq q, \,\,x\in \mathfrak{n}_{-}\cup  \mathfrak{h},$$
we have
\begin{equation}\label{cv}\alpha\otimes L(\mu)\otimes t^{n}\subseteq W \ \ \ \ {\rm for \,\,all \,\,sufficiently\,\,large} \,\,n. \end{equation}
Among all nonzero $\a=\sum_{i=1}^{r}Q_i\bar{u}\in L(\lambda, l, k, c)$ satisfying \eqref{cv}, where $Q_1,\ldots, Q_r\in U(\mathfrak{L(g)}_{-})$ are homogeneous elements with $0\geq {\rm deg\,}Q_1> \cdots > {\rm deg\,}Q_r$, we can choose $\a$ with $r$ being minimal. For $m,n,p>-{\rm deg\,}Q_r$, we have
\begin{align*}
d_md_n(\a\otimes L(\mu) \otimes t^p)&=\sum_{i=1}^r (a+p+nb-{\rm deg\,}Q_i)(a+p+n+mb-{\rm deg\,}Q_i)Q_iu\otimes L(\mu)\otimes t^{p+m+n},\\
d_{m+n}(\a\otimes L(\mu) \otimes t^p)&=\sum_{i=1}^r (a+p-{\rm deg\,}Q_i+b(m+n))Q_iu\otimes L(\mu)\otimes t^{p+m+n}.
\end{align*}
Then, for any $1\leq j\leq r$, we consider
\begin{align*}
&(a+p-{\rm deg\,}Q_j+b(m+n))d_md_n(\a\otimes L(\mu)\otimes t^p)-(a+p+nb-{\rm deg\,}Q_j)\\
&\ \ \ \cdot (a+p+n+mb-{\rm deg\,}Q_j)d_{m+n}(\a\otimes L(\mu)\otimes t^p)\\
&=\sum_{i=1}^r\big((a+p-{\rm deg\,}Q_j+b(m+n))(a+p+nb-{\rm deg\,}Q_i)(a+p+n+mb-{\rm deg\,}Q_i)\\
&\ \ \ - (a+p+nb-{\rm deg\,}Q_j)(a+p+n+mb-{\rm deg\,}Q_j)(a+p-{\rm deg\,}Q_i+b(m+n)\big)\\
&\ \ \ \cdot Q_iu\otimes L(\mu)\otimes t^{p+m+n}\\
&=\sum_{i=1}^r({\rm deg\,}Q_j-{\rm deg\,}Q_i)\big(b^2(m^2+mn+n^2)+bmn+b(2a+2p-{\rm deg\,}Q_i-{\rm deg\,}Q_j)(m+n)\\
&\ \ \ + (a+p-{\rm deg\,}Q_i)(a+p-{\rm deg\,}Q_j)\big)Q_iu\otimes L(\mu)\otimes t^{p+m+n}\subseteq W.
\end{align*}
If $r>1$, by the minimality of $r$, then we have
\begin{align*}
&b^2(m^2+mn+n^2)+bmn+b(2a+2p-{\rm deg\,}Q_{i}-{\rm deg\,}Q_{j})(m+n)\\
&\ \ \ +(a+p-{\rm deg\,}Q_{i})(a+p-{\rm deg\,}Q_{j})=0, \ \ \ \ \forall i\neq j, m,n,p>-{\rm deg\,}Q_r,
\end{align*}
from which one can deduce that $b=0$ and $(a+p-{\rm deg\,}Q_{i})(a+p-{\rm deg\,}Q_{j})=0$, which is impossible. Consequently, $r=1$ and the claim follows.
%\begin{clai}\label{claim2}
%$L(\lambda, l, k, c)\otimes L(\mu)\otimes t^n\subseteq W$ for all sufficiently large $n$.
%\end{clai}

Now  we suppose that $Q_1\bar{u}\otimes L(\mu)\otimes t^m\subseteq W$ for all $m\geq n_1$ with $n_1\in\Z$ is fixed.
%For any $i\in\N$ and $m\geq n_1$, we have
%$$d_i(Q_1u\otimes L(\mu)\otimes t^m)=(d_i+a+m-{\rm deg\,}Q_1+ib)Q_1u\otimes L(\mu)\otimes t^{m+i}\subseteq W,$$
%which indicates $d_iQ_1u\otimes L(\mu)\otimes t^{m+i}\subseteq W$. Inductively, we can show that $$d_{i_{s}}\cdots d_{i_1}Q_1u\otimes L(\mu)\otimes t^{m+i_1+\cdots+i_{s}}\subseteq W,\ \ \ \ \forall m\geq n_1, i_1,\ldots,i_s\in\N.$$
%Taking any $j_{1},\ldots,j_{t}\in\N$, $j_{t+1},\ldots,j_{t+k}\in\Z_+$, $x_{1},\ldots, x_{t}\in \mathfrak{n}_{-}\oplus\mathfrak{h}, x_{t+1}, \ldots, x_{t+k}\in \mathfrak{n}_{+}$
%%$$x_{s_1+s_2+s_3}(j_{s_1+s_2+s_3})\cdots x_{s_1+s_2+1}(j_{s_1+s_2+1})x_{s_1+s_2}(j_{s_1+s_2})\cdots x_{s_{1}+1}(j_{s_{1}+1})  d_{j_{s_{1}}}\cdots d_{j_1}Q_1u=u.$$
%and applying $x_{1}(j_{1}),\ldots,x_{t}(j_{t}),\ldots, x_{t+k}(j_{t+k})$ in turn to the above element, we see that
%\begin{equation}\label{equation1}x_{t+k}(j_{t+k})\cdots x_{1}(j_{1})d_{i_{s}}\cdots d_{i_1}Q_1u\otimes L(\mu)\otimes t^{m+i_1+\cdots+i_{s}+j_{1}+\cdots+j_{t+k}}\subseteq W,\ \ \ \ \forall m\geq n_1.\end{equation}
There exists some homogeneous $x\in U(\mathfrak{L(g)})_{-{\rm deg\,}Q_1}$ such that $x(Q_1\bar{u})=u$ since $L(\lambda, l, k, c)$ is an irreducible $\mathfrak{L(g)}$-module. This along with \eqref{equ1} forces $\bar{u}\otimes L(\mu)\otimes t^{n}\subseteq W$ for all sufficiently large $n$. Proceeding by downward induction on ${\rm deg\,}P, P\in U(\mathfrak{L(g)}_{-})$, one can easily deduce that $P\bar{u}\otimes L(\mu)\otimes t^{n}\subseteq W$
for all homogeneous  $P\in U(\mathfrak{L(g)}_{-})$ and sufficiently large $n$. We complete the proof.
\end{proof}
From the proof of the above lemma, we have the following  result.
\begin{coro}\label{coro1}
Let $W$ be a submodule of $L_{\lambda, l, k, c}^{\mu, a,b}$ and $n_0\in\Z$. If $\bar{u}\otimes L(\mu)\otimes t^n\subseteq W$ for all $n\geq n_0$, then $L(\lambda, l, k, c)\otimes L(\mu)\otimes t^n\subseteq W$ for all $n\geq n_0$.
\end{coro}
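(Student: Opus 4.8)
The plan is to reprise the final paragraph of the proof of Lemma \ref{lemma2}, but now keeping careful track of the $t$-exponents so that the threshold $n_0$ is never lost. Since $L(\lambda,l,k,c)$ is a highest weight module with highest weight vector $\bar{u}$, it is spanned by the vectors $P\bar{u}$ with $P$ running over the homogeneous elements of $U(\mathfrak{L(g)}_-)$ (each of non-positive degree). Hence it suffices to show that for every such $P$ one has $P\bar{u}\otimes L(\mu)\otimes t^n\subseteq W$ for all $n\geq n_0$; summing over a spanning set of $P$'s then gives $L(\lambda,l,k,c)\otimes L(\mu)\otimes t^n\subseteq W$ for all $n\geq n_0$, which is the assertion.

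I would prove this by induction on the length of $P$ written as a product of the generators $d_{-j}$ and $x(-j)$ ($x\in\mathfrak{g}$, $j\in\N$) and $x\in\mathfrak{n}_-$ of $U(\mathfrak{L(g)}_-)$. The base case $P=1$ is precisely the hypothesis. For the inductive step, write $P=gP'$ with $g$ one of these generators, set $j_g=-\deg g\geq 0$, and let $P'$ have smaller length. Applying the appropriate case of \eqref{equ1} to $P'\bar{u}\otimes v\otimes t^{n+j_g}$ and solving for the new summand --- for instance
\begin{align*}
(d_{-j}P'\bar{u})\otimes v\otimes t^{n}=d_{-j}\bigl(P'\bar{u}\otimes v\otimes t^{n+j}\bigr)-\bigl(a+n+j-\deg P'-bj\bigr)\,P'\bar{u}\otimes v\otimes t^{n},
\end{align*}
and likewise $(x(-j)P'\bar{u})\otimes v\otimes t^{n}=x(-j)(P'\bar{u}\otimes v\otimes t^{n+j})-P'\bar{u}\otimes xv\otimes t^{n}$ for $j\in\N$, and $(xP'\bar{u})\otimes v\otimes t^{n}=x(P'\bar{u}\otimes v\otimes t^{n})-P'\bar{u}\otimes xv\otimes t^{n}$ for $x\in\mathfrak{n}_-$ --- one sees that the right-hand side lies in $W$ as soon as $P'\bar{u}\otimes L(\mu)\otimes t^{m}\subseteq W$ for $m\in\{n,\,n+j_g\}$, which holds for every $n\geq n_0$ by the inductive hypothesis. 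Since $v\in L(\mu)$ is arbitrary, this yields $P\bar{u}\otimes L(\mu)\otimes t^n\subseteq W$ for all $n\geq n_0$, completing the induction.

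The one point that requires care --- and the reason the statement deserves to be isolated as a corollary rather than quoted verbatim from Lemma \ref{lemma2}, which only gives the conclusion ``for sufficiently large $n$'' --- is the bookkeeping of $t$-powers: each generator must be applied to the vector carrying the \emph{larger} exponent $t^{n+j_g}$, so that every vector invoked from the inductive hypothesis has exponent $\geq n_0$. Because $j_g\geq 0$ the threshold is preserved, and in the degree-zero case $x\in\mathfrak{n}_-$ it is left unchanged. No estimates beyond those already present in the proof of Lemma \ref{lemma2} are needed, so I do not anticipate any genuine obstacle.
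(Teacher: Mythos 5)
Your argument is correct and is essentially the proof the paper intends: the corollary is stated with the remark ``from the proof of the above lemma,'' referring precisely to the downward induction on $\deg P$ in the last paragraph of Lemma~\ref{lemma2}, which you have reproduced. Your additional bookkeeping of the $t$-exponents (always acting on the vector with exponent $n+j_g\geq n_0$) is exactly the observation needed to upgrade ``for sufficiently large $n$'' to ``for all $n\geq n_0$,'' so there is no gap.
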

Notice that $U(\mathfrak{L(g)}_{-})= U(\mathfrak{g}\otimes t^{-1}\C[t^{-1}])\otimes U(\mathcal{V}_{-})\otimes U(\mathfrak{n}_{-})$. Fix any $a,b\in\C$ with  $0\leq{\rm Re\,}a<1$. For any $n\in\Z$, we have a linear map $\psi_n$ from $U(\mathcal{V}_{-})$ to $\C$ defined by
$$\psi_n(d_{-k_{r}}\cdots d_{-k_{1}})=\prod_{j=1}^r(k_{j}b-a-n-\sum_{i=1}^{j}k_i), \ \ \ \ \forall d_{-k_{r}}\cdots d_{-k_{1}}\in U(\mathcal{V}_{-}).$$
For more details of this map see \cite{CGZ}. Let $\{x_1,\ldots,x_d\}$ be a basis of $\mathfrak{g}$. Denote by  $\mathbb{M}$  the set of all non-decreasing sequences $\mathbf{i}:=(i_1, i_2, \ldots)$ of finitely many positive integers. By the $\mathrm{PBW}$ Theorem, every element of $U(\mathfrak{g}\otimes t^{-1}\C[t^{-1}])$ can be uniquely written in the following form
$$x=\sum_{( \mathbf{i},\mathbf{j})\in\mathbb{M}^2} a_{\mathbf{i},\mathbf{j}} x_{j_s}(-i_s)\cdots x_{j_2}(-i_2)x_{j_1}(-i_1)\  \ \ \ {\rm with\,}a_{\mathbf{i},\mathbf{j}}\in\C.$$
For any given $x\in U(\mathfrak{g}\otimes t^{-1}\C[t^{-1}])$, when we omit the index $\mathbf{i}$, we get the associated
element $\tilde{x}=\sum_{\mathbf{j}\in\mathbb{M}} a_{\mathbf{i},\mathbf{j}} x_{j_s}\cdots x_{j_2}x_{j_1}\in U(\mathfrak{g})$. Put $L(\lambda, l, k, c)_0=\{v\in L(\lambda, l, k, c)\mid d_0v=lv\}$. Now for any $n\in\Z$, we can define a linear map
\begin{align*}
\varphi_n: \ \ \ \ U(\mathfrak{L(g)}_{-})\otimes L(\mu)&\rightarrow L(\lambda, l, k, c)_{0}\otimes L(\mu)\\
xyz\otimes v &\mapsto \psi_n(y)z\bar{u}\otimes (\tilde{x}\circ v)
\end{align*}
for all $x\in U(\mathfrak{g}\otimes t^{-1}\C[t^{-1}]), y\in U(\mathcal{V}_{-}), z\in U(\mathfrak{n}_{-})$ and $v\in L(\mu)$, where the action $\circ$ is defined as follows:
$$1\circ v=v,\ \ \ \ (g_1g_2\cdots g_s)\circ v=(-1)^s g_s\cdots g_2 g_1 v,\ \ \ \ \forall g_i\in \mathfrak{g}.$$

For the highest weight vector $\bar{u}$ of $L(\lambda, l, k, c)$, the annihilator of $\bar{u}$ is defined as $${\rm Ann\,}_{U(\mathfrak{L(g)}_{-})}(\bar{u})=\{P\in U(\mathfrak{L(g)}_{-})\mid P\bar{u}=0\}.$$

We are now in a position to present the following sufficient and necessary condition for a tensor product module to be irreducible.
\begin{theo}\label{theorem2}
$L_{\lambda, l, k, c}^{\mu, a,b}$  is irreducible if and only if $\varphi_n( {\rm Ann\,}_{U(\mathfrak{L(g)}_{-})}(\bar{u})\otimes L(\mu))=L(\lambda, l, k, c)_{0}\otimes L(\mu)$ for all  $n\in\Z$.
\end{theo}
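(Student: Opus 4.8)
Write $L:=L_{\lambda,l,k,c}^{\mu,a,b}$ and $L_n:=L(\lambda,l,k,c)\otimes L(\mu)\otimes t^n$, so $L=\bigoplus_{n\in\Z}L_n$ is the $d_0$-weight decomposition. For $m\in\Z$ let $W^{(m)}$ be the $\mathfrak{L(g)}$-submodule generated by $\{\bar{u}\otimes v\otimes t^n\mid v\in L(\mu),\,n>m\}$; by Lemma \ref{lemma1} it is nonzero, and $L_n\subseteq W^{(m)}$ for $n>m$ by Corollary \ref{coro1}. The plan is to reduce the statement to a question about the $W^{(m)}$, and then to identify the relevant subspaces of the $W^{(m)}$ with the images of the maps $\varphi_n$. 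First, a formal reduction: \emph{$L$ is irreducible if and only if $\bar{u}\otimes L(\mu)\otimes t^m\subseteq W^{(m)}$ for every $m$.} If this holds for all $m$, then $L_m\subseteq W^{(m)}$ for all $m$ by Corollary \ref{coro1}, hence every generator of $W^{(m-1)}$ lies in $W^{(m)}$, so all the $W^{(m)}$ coincide and, containing $L_n$ for $n\gg0$, equal $L$; since an arbitrary nonzero submodule contains $L_n$ for $n\gg0$ by Lemma \ref{lemma2} and hence contains some $W^{(m)}$, it follows that $L$ is irreducible. Conversely, if $\bar{u}\otimes L(\mu)\otimes t^{m_0}\not\subseteq W^{(m_0)}$ for some $m_0$, then $W^{(m_0)}$ is a proper nonzero submodule.

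For the second step I use the grading of $L(\lambda,l,k,c)$ by $d_0$-eigenvalue relative to $l$, with top piece $L(\lambda,l,k,c)_0$. Because $\mathfrak{L(g)}_+$ strictly raises the power of $t$ on the generators of $W^{(m)}$ and $\mathfrak{L(g)}_0$ preserves weights, $W^{(m)}\cap L_m$ is spanned by the vectors $P(\xi\otimes v\otimes t^{m+N})$ with $N\ge1$, $P\in U(\mathfrak{L(g)}_-)$ homogeneous of degree $-N$, $\xi\in L(\lambda,l,k,c)$, $v\in L(\mu)$, each of which lies entirely in $L_m$ by \eqref{equ1}. Factoring $P=xyz$ along $U(\mathfrak{L(g)}_-)=U(\mathfrak{g}\otimes t^{-1}\C[t^{-1}])\otimes U(\mathcal V_-)\otimes U(\mathfrak{n}_-)$ and unwinding \eqref{equ1}, one checks that the component of $P(\bar{u}\otimes v\otimes t^{m+N})$ in the top piece $L(\lambda,l,k,c)_0\otimes L(\mu)\otimes t^m$ equals $\varphi_n(P\otimes v)\otimes t^m$ up to a fixed nonzero scalar and the reindexing $m\leftrightarrow n$: the Virasoro factors $y$ contribute exactly the scalars packaged into $\psi_n$, the affine factors $x$ enter $L(\mu)$ through $\tilde{x}$ and the twisted action $\circ$, and the factors $z\in U(\mathfrak{n}_-)$ act on $\bar{u}$ — this mirrors the Virasoro computation of \cite{CGZ} and the affine one of \cite{GZ} — while the component of lowest $d_0$-weight is $(P\bar{u})\otimes v\otimes t^m$, which vanishes precisely when $P\in{\rm Ann\,}_{U(\mathfrak{L(g)}_-)}(\bar{u})$. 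Carrying this through all the $d_0$-weight pieces simultaneously gives the key equivalence: $\bar{u}\otimes L(\mu)\otimes t^m\subseteq W^{(m)}$ if and only if $\varphi_n({\rm Ann\,}_{U(\mathfrak{L(g)}_-)}(\bar{u})\otimes L(\mu))=L(\lambda,l,k,c)_0\otimes L(\mu)$ for the index $n$ matching $m$; as $m$ runs over $\Z$ so does $n$, and combined with the reduction this proves the theorem.

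To make the ``if'' direction concrete: given $v\in L(\mu)$, pick $\sum_iP_i\otimes v_i\in{\rm Ann\,}_{U(\mathfrak{L(g)}_-)}(\bar{u})\otimes L(\mu)$ with the $P_i$ homogeneous of strictly negative degrees $-N_i$ (degree-zero summands are discarded since $\varphi_n$ annihilates them) and $\varphi_n(\sum_iP_i\otimes v_i)=\bar{u}\otimes v$; since each $\bar{u}\otimes v_i\otimes t^{m+N_i}$ lies in $L_{m+N_i}\subseteq W^{(m)}$, the vector $\sum_iP_i(\bar{u}\otimes v_i\otimes t^{m+N_i})$ lies in $W^{(m)}\cap L_m$, equals $\bar{u}\otimes v\otimes t^m$ plus terms of strictly lower $d_0$-weight (and carrying no lowest term, as $P_i\bar{u}=0$), and one strips those lower terms off by the simultaneous analysis above. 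The ``only if'' direction is the contrapositive: if some $\varphi_{n_0}$ is not surjective, then $\bar{u}\otimes L(\mu)\otimes t^m\not\subseteq W^{(m)}$ for the matching $m$, so $W^{(m)}$ is a proper nonzero submodule.

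The real obstacle is the identification of $W^{(m)}\cap L_m$ in the second paragraph — precisely, showing that the obstruction to $\bar{u}\otimes L(\mu)\otimes t^m$ lying in $W^{(m)}$ is measured exactly by the image $\varphi_n({\rm Ann\,}_{U(\mathfrak{L(g)}_-)}(\bar{u})\otimes L(\mu))$. The action \eqref{equ1} inextricably mixes the $d_0$-weight pieces of the $L(\lambda,l,k,c)$-tensorand, so the top piece cannot be isolated one vector at a time, and the downward analysis over the pieces must be run with care; the genuinely new ingredient over \cite{CGZ} is that the loop directions $\mathfrak{g}\otimes t^{-1}\C[t^{-1}]$ couple to $L(\mu)$ through the anti-homomorphism $\circ$, and it is this coupling, together with the normalization in \eqref{equ1}, that pins down the exact form of $\psi_n$ and $\varphi_n$.
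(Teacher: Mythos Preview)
Your reduction to the submodules $W^{(m)}$ is correct and matches the paper's setup, but the heart of your argument---the claim that the projection of $P(\bar u\otimes v\otimes t^{m+N})$ onto the top piece $L(\lambda,l,k,c)_0\otimes L(\mu)\otimes t^m$ equals $\varphi_m(P\otimes v)$ up to a nonzero scalar---is false. Take $P=d_{-2}d_{-1}$, $N=3$, $a=b=m=0$. Applying $d_{-1}$ to $\bar u\otimes v\otimes t^3$ and keeping only the top piece yields the scalar $3$; then applying $d_{-2}$ yields the scalar $2$; so the top-piece coefficient is $6$. But $\psi_0(d_{-2}d_{-1})=(-1)(-3)=3$. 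There is no ``fixed nonzero scalar'' or reindexing that repairs this: the top-piece scalars are $\prod_j(a+m+\sum_{i\ge j}k_i-bk_j)$, whereas $\psi_m$ has $\sum_{i\le j}k_i$, and these are genuinely different functions of the $k_j$. Consequently your ``if'' paragraph does not produce $\bar u\otimes v\otimes t^m$, and your ``only if'' does not bound the top piece of $W^{(m)}\cap L_m$ by $\varphi_m(\mathrm{Ann}\otimes L(\mu))$.

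The paper's mechanism is not a top-piece projection at all; it is the congruence
\[
P\bar u\otimes v\;\equiv\;\varphi_N(P\otimes v)\pmod{W_N}
\]
for \emph{every} $P\in U(\mathfrak{L(g)}_-)$, proved by peeling off one generator of $P$ at a time. The point is that after removing a single factor $d_{-k}$ (or $x(-i)$) from the left of $P$, the remaining vector $P'\bar u\otimes v$ sits, when tensored with $t^{N+k}$, inside $L_{N+k}\subseteq W$; so the whole action of that factor on $P'\bar u\otimes v\otimes t^{N+k}$ lies in $W_N$, and one reads off the scalar $-(a+N+\sum_{j}k_j-bk)$ against the $\psi_N$ recursion. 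This uses $L_i\subseteq W$ for \emph{all} $i>N$, not just $i=N+\deg P$, which is exactly what your single-shot application of $P$ to $\bar u\otimes v\otimes t^{m+N}$ cannot exploit. For the ``if'' direction one then takes $P\in\mathrm{Ann}$ so the left side vanishes and the right side sweeps out the top piece inside $W_N$; for ``only if'' the paper descends the congruence to a well-defined map $\hat\varphi_{n_0}$ on $L(\lambda,l,k,c)\otimes L(\mu)$ and checks by direct computation that the generators $G(-i)(\cdots)$ and $d_{-i}(\cdots)$ of $W^{(n_0)}_{n_0}$ lie in its kernel---your sketch does not supply this computation either. Your ``strip off the lower terms'' is precisely the step that requires this modular congruence rather than a projection.
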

\begin{proof}
Let $W=\sum_{n\in\Z}W_n\otimes t^n$ be a nonzero submodule of  $L_{\lambda, l, k, c}^{\mu, a,b}$. Thanks to Lemma \ref{lemma2}, there exists $N\in\Z$ such that $L(\lambda, l, k, c)\otimes L(\mu)\otimes t^i\subseteq W$ for all $i\geq N+1$.
\begin{clai}\label{cvbn}
For any $v\in L(\mu), x\in U(\mathfrak{g}\otimes t^{-1}\C[t^{-1}]), y\in U(\mathcal{V}_{-})$ and $z\in U(\mathfrak{n}_{-})$, we have $xyz\bar{u}\otimes v\equiv \psi_{N}(y)z\bar{u}\otimes (\tilde{x}\circ v)=\varphi_{N}(xyz\otimes v)\,\,{\rm mod\,}W_{N}$.
\end{clai}
By linearity it is sufficient to prove the claim for $x=x_{j_s}(-i_s)\cdots x_{j_1}(-i_1)\in U(\mathfrak{g}\otimes t^{-1}\C[t^{-1}]), y=d_{-k_{r}}\cdots d_{-k_{1}}\in U(\mathcal{V}_{-})$ and $z\in U(\mathfrak{n}_{-})$ with $x_{j_1},\ldots,x_{j_s}\in \mathfrak{g}$ and $i_1,\ldots,i_s\in\N, k_1\ldots,k_r\in\N$. We proceed by induction on $s$. If $s=0$, we again do it by induction on $r$. The case $r=0$ is clear. Assume that the result is true for $y=d_{-k_{r}}\cdots d_{-k_{1}}\in U(\mathcal{V}_{-}), z\in U(\mathfrak{n}_{-})$ and $v\in L(\mu)$. Then from
\begin{align*}
&d_{-k_{r+1}}(d_{-k_{r}}\cdots d_{-k_{1}}z\bar{u}\otimes v\otimes t^{N+k_{r+1}})\\
=&(d_{-k_{r+1}}+a+N+\sum_{j=1}^{r+1}k_j-bk_{r+1})(d_{-k_{r}}\cdots d_{-k_{1}}z\bar{u})\otimes v\otimes t^{N}\in W,\end{align*}
it follows immediately that
\begin{align*}
&(d_{-k_{r+1}}d_{-k_{r}}\cdots d_{-k_{1}}z\bar{u})\otimes v\\
\equiv &-(a+N+\sum_{j=1}^{r+1}k_j-bk_{r+1})(d_{-k_{r}}\cdots d_{-k_{1}}z\bar{u})\otimes v\\
\equiv &-(a+N+\sum_{j=1}^{r+1}k_j-bk_{r+1})\psi_{N}(d_{-k_{r}}\cdots d_{-k_{1}})z\bar{u}\otimes v\\
\equiv & \psi_{N}(d_{-k_{r+1}}d_{-k_{r}}\cdots d_{-k_{1}})z\bar{u}\otimes v\,\,{\rm mod\,}W_{N}.
\end{align*}
Thus, the case $s=0$ holds. Now suppose that the result is true for  $x=x_{j_s}(-i_s)\cdots x_{j_1}(-i_1)$, $y=d_{-k_{r}}\cdots d_{-k_{1}},z\in U(\mathfrak{n}_{-})$ and $v\in L(\mu)$. Then for any $ x_{j_{s+1}}\in\mathfrak{g}, i_{s+1}\in\N$, we have
\begin{align*}
&x_{j_{s+1}}(-i_{s+1})\big(x_{j_s}(-i_s)\cdots x_{j_1}(-i_1)d_{-k_{r}}\cdots d_{-k_{1}}z\bar{u}\otimes v\otimes t^{N+i_{s+1}}\big)\\
=&x_{j_{s+1}}(-i_{s+1})x_{j_s}(-i_s)\cdots x_{j_1}(-i_1)d_{-k_{r}}\cdots d_{-k_{1}}z\bar{u}\otimes v\otimes t^{N}\\
&+x_s(-i_s)\cdots x_1(-i_1)d_{-k_{r}}\cdots d_{-k_{1}}z\bar{u}\otimes x_{j_{s+1}}v\otimes t^{N}\in W,
\end{align*}
forcing
\begin{align*}
&x_{j_{s+1}}(-i_{s+1})x_{j_s}(-i_s)\cdots x_{j_1}(-i_1)d_{-k_{r}}\cdots d_{-k_{1}}z\bar{u}\otimes v\\
\equiv &-x_{j_s}(-i_s)\cdots x_{j_1}(-i_1)d_{-k_{r}}\cdots d_{-k_{1}}z\bar{u}\otimes x_{j_{s+1}} v\,\,{\rm mod\,}W_{N}\\
\equiv &-\psi_{N}(d_{-k_{r}}\cdots d_{-k_{1}})z\bar{u}\otimes ((x_{j_s}\cdots x_{j_1})\circ x_{j_{s+1}} v)\,\,{\rm mod\,}W_{N}\\
\equiv & \psi_{N}(d_{-k_{r}}\cdots d_{-k_{1}})z\bar{u}\otimes ((x_{j_{s+1}}x_{j_s}\cdots x_{j_1})\circ  v)\,\,{\rm mod\,}W_{N}.
\end{align*}
Hence the claim follows by induction.

First we assume that $\varphi_n( {\rm Ann\,}_{U(\mathfrak{L(g)}_{-})}(\bar{u})\otimes L(\mu))=L(\lambda, l, k, c)_{0}\otimes L(\mu)$ for all $n\in\Z$. Then for any $v\in L(\mu)$, there exist $x_i\in {\rm Ann\,}_{U(\mathfrak{L(g)}_{-})}(u)$ and $v_i\in L(\mu)$ such that $\varphi_N(\sum_{i=1}^s x_i\otimes v_i)=\bar{u}\otimes v$. By Claim \ref{cvbn}, we have
$$\bar{u}\otimes v=\varphi_{N}(\sum_{i=1}^s x_i\otimes v_i)\equiv \sum_{i=1}^s x_i\bar{u}\otimes v_i=0\,\,{\rm mod\,}W_{N}.$$
That is $\bar{u}\otimes L(\mu)\subseteq W_{N}$. We conclude that $L(\lambda, l, k, c)\otimes L(\mu)=W_{N}$ from Corollary \ref{coro1}. Inductively, we get $W_i=L(\lambda, l, k, c)\otimes L(\mu)$ for all $i\in\Z$, i.e., $W=L_{\lambda, l, k, c}^{\mu, a,b}$ and hence $L_{\lambda, l, k, c}^{\mu, a,b}$ is irreducible.

Conversely, suppose that $\varphi_{n_0}( {\rm Ann\,}_{U(\mathfrak{L(g)}_{-})}(\bar{u})\otimes L(\mu))\neq L(\lambda, l, k, c)_{0}\otimes L(\mu)$ for some $n_0\in\Z$. Then we have an induced map
\begin{align*}
\overline{\varphi}_{n_0}: \ \ \ \ U(\mathfrak{L(g)}_{-})\otimes L(\mu)&\rightarrow (L(\lambda, l, k, c)_{0}\otimes L(\mu))/\varphi_{n_0}({\rm Ann\,}_{U(\mathfrak{L(g)}_{-})}(\bar{u})\otimes L(\mu))\\
P\otimes v&\mapsto\overline{\varphi_{n_0}(P\otimes v)}
\end{align*}
for all $P\in U(\mathfrak{L(g)}_{-})$ and $v\in L(\mu)$. It is apparent that $\overline{\varphi}_{n_0}({\rm Ann\,}_{U(\mathfrak{L(g)}_{-})}(\bar{u})\otimes L(\mu))=0$. Observing that $L(\lambda, l, k, c)=U(\mathfrak{L(g)}_{-})\bar{u}\cong U(\mathfrak{L(g)}_{-})/{\rm Ann\,}_{U(\mathfrak{L(g)}_{-})}(\bar{u})$, then we obtain the induced linear map
\begin{align*}
\widehat{\varphi}_{n_0}: \ \ \ \ L(\lambda, l, k, c)\otimes L(\mu)&\rightarrow (L(\lambda, l, k, c)_{0}\otimes L(\mu))/\varphi_{n_0}({\rm Ann\,}_{U(\mathfrak{L(g)}_{-})}(\bar{u})\otimes L(\mu))\\
P\bar{u}\otimes v&\mapsto \overline{\varphi_{n_0}(P\otimes v)}
\end{align*}
for all $P\in U(\mathfrak{L(g)}_{-})$ and $v\in L(\mu)$. Let $W^{(n_0)}$ be the submodule of  $L_{\lambda, l, k, c}^{\mu, a,b}$ generated by  $L(\lambda, l, k, c)\otimes L(\mu)\otimes t^i,i\geq n_0+1$. We shall show that  $W^{(n_0)}$ is a proper submodule of $L_{\lambda, l, k, c}^{\mu, a,b}$. According to the $\mathrm{PBW}$ Theorem, the weight space of  $W^{(n_0)}$ with respect to the action of $d_0$ of weight $a+l+n_0$, denoted by $W^{(n_0)}_{n_0}$, is a sum of elements of the form
$$G(-i)(xyz\bar{u}\otimes v\otimes t^{n_0+i})=\big(G(-i)xyz \bar{u}\otimes v+xyz u\otimes Gv\big)\otimes t^{n_0}$$
and
$$d_{-i}(xyz\bar{u}\otimes v\otimes t^{n_0+i})=(d_{-i}+a+n_0+i-{\rm deg\,}(xyz)-bi)xyz\bar{ u}\otimes v\otimes t^{n_0},$$
where $G\in\mathfrak{g},x\in U(\mathfrak{g}\otimes t^{-1}\C[t^{-1}]), y\in U(\mathcal{V}_{-}), z\in U(\mathfrak{n}_{-}),  v\in L(\mu), i\in\N$ and  $x,y$ are homogeneous elements. Explicit calculations show that
\begin{align*}
&\widehat{\varphi}_{n_0}\big(G(-i)xyz \bar{u}\otimes v+xyz \bar{u}\otimes Gv\big)\\
=&\overline{\varphi_{n_0}\big(G(-i)xyz\otimes v+xyz\otimes Gv\big)}\\
=&\psi_{n_0}(y)\overline{z\bar{u}\otimes (G\tilde{x}\circ v)+z\bar{u}\otimes (\tilde{x}\circ Gv)}\\
=&0
\end{align*}
and
\begin{align*}
&\widehat{\varphi}_{n_0}\big((d_{-i}+a+n_0+i-{\rm deg\,}(xyz)-bi)xyz \bar{u}\otimes v\big)\\
=&\overline{\varphi_{n_0}\big((d_{-i}+a+n_0+i-{\rm deg\,}(xyz)-bi)xyz \otimes v\big)}\\
=&\overline{\varphi_{n_0}(d_{-i}xyz \otimes v)+(a+n_0+i-{\rm deg\,}(xyz)-bi)\psi_{n_0}(y)z\bar{u} \otimes (x\circ v)}\\
%&=\overline{\varphi_{n_0}\big((x(\mathbf{i})d_{-i}+{\rm deg\,}(x(\mathbf{i}))x(\mathbf{i}'))yz \otimes v\big)}+(a+n_0+i-{\rm deg\,}(x(\mathbf{i})yz)-bi)\\
%&\ \ \ \cdot\psi_{n_0}(y)\overline{zu \otimes (x\circ v)}\\
=&\psi_{n_0}(d_{-i}y)\overline{z\bar{u} \otimes (\tilde{x}\circ v)}+{\rm deg\,}(x)\psi_{n_0}(y)\overline{z\bar{u} \otimes (\tilde{x}\circ v)}\\
&+(a+n_0+i-{\rm deg\,}(xyz)-bi)\psi_{n_0}(y)\overline{z\bar{u} \otimes (\tilde{x}\circ v)}\\
=&0.
\end{align*}
Note that the last equality follows directly from $\psi_{n_0}(d_{-i}y)=(bi-a-n_0+{\rm deg\,}(y)-i)\psi_{n_0}(y)$ and  ${\rm deg\,}(xyz)={\rm deg\,}(x)+{\rm deg\,}(y)$. As a result, we have $\widehat{\varphi}_{n_0}(W^{(n_0)}_{n_0})=0$, or equivalently, $W^{(n_0)}_{n_0}\subseteq {\rm Ker\,}(\widehat{\varphi}_{n_0})\neq L(\lambda, l, k, c)\otimes L(\mu)$ since $\widehat{\varphi}_{n_0}$ is surjective and nonzero. Thus, $W^{({n_0})}$ is a proper submodule of $L_{\lambda, l, k, c}^{\mu, a,b}$, that is, $L_{\lambda, l, k, c}^{\mu, a,b}$ is reducible, completing the proof.
\end{proof}
%\begin{rema}
%If the unique maximal submodule $J(\lambda, l, k, c)$ is generated by all singular vectors besides $\mathbb{C}u$, then we can give the description of the annihilator ${\rm Ann\,}_{U(\mathfrak{L(g)}_{-})}(u)$ by the structure of singular vectors given in Lemma \ref{coro-2}. More precisely,
%$${\rm Ann\,}_{U(\mathfrak{L(g)}_{-})}(\bar{u})=\{E\in U(\mathfrak{L(g)}_{-})\mathfrak{L(g)}_{-}\mid Eu\,\text{is\,\,a\,\,singular\,\,vector}\}.$$
%\end{rema}

From  \cite[Corollary 1]{K}, we see that  there are a lot of irreducible highest weight modules $L(\lambda, l, k, c)$ that are Verma modules over $\mathfrak{L(g)}$. Meanwhile, for these irreducible Verma modules, we know that ${\rm Ann\,}_{U(\mathfrak{L(g)}_{-})}(\bar{u})=0$.
As a direct consequence of Theorem \ref{theorem2}, we have the following.
\begin{coro}\label{last cor}
Let $L(\lambda, l, k, c)$ be an irreducible Verma module over $\mathfrak{L(g)}$. Then $L_{\lambda, l, k, c}^{\mu, a,b}$ is always reducible.
\end{coro}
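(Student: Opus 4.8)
The plan is to deduce this directly from Theorem \ref{theorem2}, with essentially no extra computation. The only input needed is the elementary observation recorded in the sentence preceding the corollary: when $M(\lambda,l,k,c)$ is irreducible — so that $L(\lambda,l,k,c)=M(\lambda,l,k,c)$ — the annihilator ${\rm Ann}_{U(\mathfrak{L(g)}_{-})}(\bar u)$ is zero. Indeed, by the PBW theorem the map $U(\mathfrak{L(g)}_{-})\to M(\lambda,l,k,c)$, $P\mapsto Pu$, is a linear isomorphism, so no nonzero $P\in U(\mathfrak{L(g)}_{-})$ can kill the highest weight vector.

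With ${\rm Ann}_{U(\mathfrak{L(g)}_{-})}(\bar u)=0$, the subspace $\varphi_n\big({\rm Ann}_{U(\mathfrak{L(g)}_{-})}(\bar u)\otimes L(\mu)\big)$ appearing in the criterion of Theorem \ref{theorem2} equals $\varphi_n(\{0\}\otimes L(\mu))=\{0\}$ for every $n\in\Z$, since $\varphi_n$ is linear. On the other hand $L(\lambda,l,k,c)_0\otimes L(\mu)$ is nonzero: it contains $\bar u\otimes v_\mu$, because $\bar u\in L(\lambda,l,k,c)_0$ (the highest weight vector has $d_0$-weight $l$) and $v_\mu\neq 0$. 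Hence the equality $\varphi_n\big({\rm Ann}_{U(\mathfrak{L(g)}_{-})}(\bar u)\otimes L(\mu)\big)=L(\lambda,l,k,c)_0\otimes L(\mu)$ demanded by Theorem \ref{theorem2} fails for every $n$, so $L_{\lambda,l,k,c}^{\mu,a,b}$ cannot be irreducible; being a nonzero module, it is therefore reducible.

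There is really no obstacle here — the corollary is a one-line consequence of the main theorem. The only points that deserve explicit mention in the write-up are (i) justifying ${\rm Ann}_{U(\mathfrak{L(g)}_{-})}(\bar u)=0$ from the hypothesis that the Verma module is irreducible, and (ii) noting that $L(\lambda,l,k,c)_0\otimes L(\mu)\neq\{0\}$, which uses $\mu\in\Lambda^+$ so that $L(\mu)\neq 0$ (and in any case throughout Section 4 we assume $0\neq\mu\in\Lambda^+$). If desired, one can make the failure of irreducibility concrete by invoking the proper submodule $W^{(n_0)}$ constructed in the proof of Theorem \ref{theorem2} — for any fixed $n_0$ it is generated by $L(\lambda,l,k,c)\otimes L(\mu)\otimes t^i$ with $i\geq n_0+1$ and is proper precisely because $\widehat\varphi_{n_0}$ is nonzero — but this is not needed for the argument.
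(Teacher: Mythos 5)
Your proposal is correct and is exactly the argument the paper intends: the corollary is stated as "a direct consequence of Theorem \ref{theorem2}" via the observation that ${\rm Ann}_{U(\mathfrak{L(g)}_{-})}(\bar u)=0$ for an irreducible Verma module, so $\varphi_n$ applied to it gives $\{0\}\neq L(\lambda,l,k,c)_0\otimes L(\mu)$. Your write-up merely makes explicit the two small points (PBW for the vanishing of the annihilator, nontriviality of the target space) that the paper leaves implicit.
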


We remark  that the condition in Theorem \ref{theorem2} is not easy to verify since we generally do not know what ${\rm Ann\,}_{U(\mathfrak{L(g)}_{-})}(\bar{u})$ is for an irreducible highest $\mathfrak{L(g)}$-module $L_{\lambda, l, k, c}^{\mu, a,b}$. Fortunately, we do have an explicit description of ${\rm Ann\,}_{U(\mathfrak{L(g)}_{-})}(\bar{u})$  under certain situations (see the forthcoming Remark \ref{rem for Ann}).

The following are standard notions from \cite{KV}. Let $\alpha_i,i=1,\ldots,r$ be the simple roots of $\mathfrak{g}$, $\check{\alpha_i}$ be the corresponding coroots, and $\delta$ be the standard imaginary root. Put $\alpha_0=\delta-\theta$. Then $\alpha_i,i=0,\ldots,r$ are the simple roots of  $\tilde{\mathfrak{g}}$. Take $e_{\theta}\in \mathfrak{g}_{\theta}$. Clearly, $f_0=e_{\theta}(-1)$ and $\check{\alpha_0}=\mathbf{k}-\check{\theta}$. As is well-known, any weight  $\Lambda\in(\tilde{\mathfrak{h}}+\C d_0)^{*}$ is of the form $\Lambda=\lambda+k \Lambda_0$, where $\lambda\in \mathfrak{h}^{*}, k\in\C$ and $\Lambda_0$ is the root satisfies $\langle\Lambda_0,\check{\alpha_i}\rangle=\delta_{i,0}, \langle\Lambda_0,d_0\rangle=0$. It is worth noting that $\Lambda=\lambda+k \Lambda_0$ is dominant if and only if $\lambda$ is a dominant weight relative to $\mathfrak{g}$ and $k\in\N$ such that $k\geq \langle\lambda,\check{\theta}\rangle$.
%provided that  $k\neq -g$ and the highest weight of $\tilde{\mathfrak{g}}$ is dominant.

%Now we assemble a few known results about Verma modules $M(\lambda, l, k, c), M_{\mathcal{V}}(l, c)$ and $M_{\tilde{\mathfrak{g}}}(\lambda, k)$.
We now recall and establish several auxiliary results.
\begin{lemm}\label{coro-21}(cf.~\cite{A,FF,KV}) The following statements hold.
\begin{itemize}
\item [(1)] There exist two homogeneous  elements $$F_1^{(l, c)}(d_{-i_{t}}\cdots d_{-i_{1}}), F_2^{(l, c)}(d_{-j_{s}}\cdots d_{-j_{1}})\in U(\mathcal{V}_{-})$$  such that
$$U(\mathcal{V}_{-})F_1^{(l, c)}(d_{-i_{t}}\cdots d_{-i_{1}})u_{l, c}+U(\mathcal{V}_{-})F_2^{(l, c)}(d_{-j_{s}}\cdots d_{-j_{1}})u_{l, c}$$ is the unique maximal proper submodule of $M_{\mathcal{V}}(l, c)$.  %where $F_1^{(l, c)}, F_2^{(l, c)}$ are allowed to be zero;
\item [(2)] If  $\lambda+k \Lambda_0$ is a dominant weight, then
$$U(\tilde{\mathfrak{n}}_{-})f_0^{\langle\lambda+k\Lambda_0,\check{\alpha_0}\rangle+1}u_{\lambda, k}+\cdots+U(\tilde{\mathfrak{n}}_{-})f_r^{\langle\lambda+k\Lambda_0,\check{\alpha_r}\rangle+1}u_{\lambda, k}$$
is the unique maximal proper submodule of $M_{\tilde{\mathfrak{g}}}(\lambda, k)$.
\end{itemize}

\end{lemm}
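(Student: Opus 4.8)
Both statements are classical facts about the structure of Verma modules, and the plan is to deduce them from existing results in the literature rather than reprove them from scratch. For part (2), the key input is the Kac--Kazhdan determinant formula together with the fact that for a \emph{dominant} weight $\Lambda=\lambda+k\Lambda_0$ the affine Verma module $M_{\tilde{\mathfrak{g}}}(\lambda,k)$ is an \emph{integrable} highest weight module in the category $\mathcal O$; here I would invoke the standard description (see \cite{KV}, or Kac's book) of the maximal submodule of an integrable Verma module as the sum of the submodules generated by $f_i^{\langle\Lambda,\check\alpha_i\rangle+1}u_{\lambda,k}$ over all simple roots $\alpha_i$ of $\tilde{\mathfrak g}$, $i=0,1,\dots,r$. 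The only point requiring a word of explanation is the identification of the zeroth generator: since $f_0=e_\theta(-1)$ and $\check\alpha_0=\mathbf k-\check\theta$, one has $\langle\Lambda,\check\alpha_0\rangle=k-\langle\lambda,\check\theta\rangle$, so the exponent $\langle\lambda+k\Lambda_0,\check\alpha_0\rangle+1$ is exactly the number appearing in the statement; dominance of $\Lambda$ guarantees all these exponents are nonnegative integers, so the relevant singular vectors genuinely exist. I would cite \cite{KV} (and \cite{A}) for the precise statement and note that it specializes Garland--Lepowsky / the Kac--Kazhdan machinery.

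For part (1), the analogous statement over the Virasoro algebra is Feigin--Fuchs' theorem on the structure of Virasoro Verma modules: $M_{\mathcal V}(l,c)$ has at most two singular vectors beyond the highest weight one (this is the content of the classification of embedding diagrams into the cases "point", "link", "chain", "braid"), and the maximal proper submodule is generated by them. I would simply recall from \cite{FF} (see also \cite{A}) that there exist homogeneous elements $F_1^{(l,c)}, F_2^{(l,c)}\in U(\mathcal V_-)$, possibly one or both equal to a power of $d_{-1}$ or to $1$ in degenerate cases, such that $F_1^{(l,c)}(d_{-i_t}\cdots d_{-i_1})u_{l,c}$ and $F_2^{(l,c)}(d_{-j_s}\cdots d_{-j_1})u_{l,c}$ are (up to scalars) the maximal singular vectors, and their $U(\mathcal V_-)$-submodules sum to the unique maximal proper submodule $J_{\mathcal V}(l,c)$. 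The notation in the statement is chosen so that $F_i^{(l,c)}$ is displayed as a polynomial in the $d_{-k}$'s; no further argument is needed once the Feigin--Fuchs description is quoted.

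Since this is an assembly of known results, there is no serious obstacle; the "hard part" is purely expository, namely stating the Feigin--Fuchs and Kac--Kazhdan/Kumar results in a form whose notation matches what is needed downstream (in particular, that the generators are \emph{homogeneous}, so that the maps $\psi_n$ and $\varphi_n$ from Section 4 can be applied to them). I would therefore keep the proof short: one sentence citing \cite{FF} (and \cite{A}) for (1), one sentence citing \cite{KV} (and \cite{A}) for (2), plus the brief remark identifying $f_0=e_\theta(-1)$ and $\check\alpha_0=\mathbf k-\check\theta$ so that the exponent $\langle\lambda+k\Lambda_0,\check\alpha_0\rangle+1$ is the one written. \QED
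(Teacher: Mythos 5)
Your proposal matches the paper exactly: the paper gives no proof of this lemma, stating it with ``(cf.~\cite{A,FF,KV})'', and the intended justification is precisely what you describe --- the Feigin--Fuchs/Astashkevich description of the maximal proper submodule of a Virasoro Verma module as generated by at most two homogeneous singular vectors for (1), and Kac's description of the maximal submodule of an affine Verma module with dominant integral highest weight for (2). One small terminological slip: the Verma module $M_{\tilde{\mathfrak{g}}}(\lambda,k)$ is never itself integrable (the $f_i$ act freely on it); what you mean is that its irreducible quotient $L_{\tilde{\mathfrak{g}}}(\lambda,k)$ is integrable and the kernel of $M_{\tilde{\mathfrak{g}}}(\lambda,k)\twoheadrightarrow L_{\tilde{\mathfrak{g}}}(\lambda,k)$ is $\sum_{i=0}^{r}U(\tilde{\mathfrak{n}}_{-})f_i^{\langle\lambda+k\Lambda_0,\check{\alpha_i}\rangle+1}u_{\lambda,k}$, which is exactly the result cited from \cite{KV}.
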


\begin{lemm}\label{coro-2}(cf.~\cite{K})
Keep notations as before. Assume that $k\neq -g$. Then we have $\mathfrak{L(g)}$-module isomorphisms
\begin{align*}
\pi_1&:  M(\lambda, l, k, c)\cong M_{\tilde{\mathfrak{g}}}^{\mathfrak{L(g)}}(\lambda, k)\otimes M_{\mathcal{V}}^{\mathfrak{L(g)}}(l+\frac{c_{\lambda}}{2(k+g)}, c-\frac{k({\rm dim \,}\mathfrak{g})}{k+g}),\\
\pi_2&:  L(\lambda, l, k, c)\cong L_{\tilde{\mathfrak{g}}}^{\mathfrak{L(g)}}(\lambda, k)\otimes L_{\mathcal{V}}^{\mathfrak{L(g)}}(l+\frac{c_{\lambda}}{2(k+g)}, c-\frac{k({\rm dim \,}\mathfrak{g})}{k+g}).
\end{align*}
%Moreover, we have
%\begin{itemize}
%\item [(1)]
%  $F_1^{(l, c)}(D_{-i_{t}}\cdots D_{-i_{1}})u, F_2^{(l, c)}(D_{-j_{s}}\cdots D_{-j_{1}})u$ are singular vectors in $$M(\lambda, l-\frac{c_{\lambda}}{2(k+g)}, k, c-\frac{k({\rm dim \,}\mathfrak{g})}{k+g}).$$
%  \item [(2)]Let $(\lambda+\rho\mid \alpha)+m(k+g)-\frac{1}{2}n(\alpha\mid \alpha)=0$ for some $m\in\Z_+$ and $n\in\N$ such that $\alpha+m \delta$ lies in the positive roots of  $\mathfrak{L(g)}$. One knows that there exists a singular vector $Eu_{\lambda, k}$ for $\tilde{\mathfrak{g}}$ in $\big(M_{\tilde{\mathfrak{g}}}(\lambda, k)\big)_{\lambda-n(\alpha+m\delta)}$. Then $Eu$ is a singular vector for $\mathfrak{L(g)}$ in $M(\lambda, l, k, c)$.
%\end{itemize}
\end{lemm}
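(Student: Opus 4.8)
The plan is to construct $\pi_1$ from the universal property of the Verma module $M(\lambda, l, k, c)$, to check that it is an isomorphism by a $\mathrm{PBW}$ character count, and then to deduce $\pi_2$ from it together with the irreducibility of the tensor product of the two irreducible quotients. For $\pi_1$, abbreviate $l'=l+\frac{c_{\lambda}}{2(k+g)}$, $c'=c-\frac{k({\rm dim\,}\mathfrak{g})}{k+g}$ and $\widetilde{M}=M_{\tilde{\mathfrak{g}}}^{\mathfrak{L(g)}}(\lambda, k)\otimes M_{\mathcal{V}}^{\mathfrak{L(g)}}(l', c')$. First I would verify that $u_{\lambda, k}\otimes u_{l', c'}$ is a highest weight vector of $\widetilde{M}$ of weight $(\lambda, l, k, c)$: since $\tilde{\mathfrak{g}}$ acts trivially on the second tensorand, $\tilde{\mathfrak{n}}_{+}$ acts through the first one and annihilates this vector; for $m>0$ the operator $d_m$ acts as $\frac{1}{k+g}T_m\otimes 1+1\otimes d_m$, and a direct computation with the normal-ordered sums gives $T_m u_{\lambda, k}=0$ for $m>0$ and $T_0 u_{\lambda, k}=-\frac{1}{2}c_{\lambda}u_{\lambda, k}$ (the $j=0$ term of $T_0$ is the Casimir of $\mathfrak{g}$, with eigenvalue $c_{\lambda}$ on the highest weight line), while $d_m u_{l', c'}=0$; the central elements act by $\mathbf{k}\mapsto k$ and $\mathbf{c}\mapsto\frac{k({\rm dim\,}\mathfrak{g})}{k+g}+c'=c$; so the $d_0$-eigenvalue of $u_{\lambda, k}\otimes u_{l', c'}$ equals $-\frac{c_{\lambda}}{2(k+g)}+l'=l$. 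The universal property of $M(\lambda, l, k, c)$ then gives an $\mathfrak{L(g)}$-homomorphism $\pi_1\colon M(\lambda, l, k, c)\to\widetilde{M}$ with $\pi_1(u)=u_{\lambda, k}\otimes u_{l', c'}$.

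To see that $\pi_1$ is onto, let $W$ be the submodule generated by $u_{\lambda, k}\otimes u_{l', c'}$. Since $\tilde{\mathfrak{n}}_{-}$ acts through the first tensorand, $U(\tilde{\mathfrak{n}}_{-})(u_{\lambda, k}\otimes u_{l', c'})=M_{\tilde{\mathfrak{g}}}(\lambda, k)\otimes u_{l', c'}\subseteq W$, and then one shows by induction on length that $M_{\tilde{\mathfrak{g}}}(\lambda, k)\otimes v'\subseteq W$ for every monomial $v'=d_{-m_1}v''$ in the $d_{-m}$'s ($m>0$) applied to $u_{l', c'}$: for any $w\in M_{\tilde{\mathfrak{g}}}(\lambda, k)$,
$$w\otimes v'=d_{-m_1}(w\otimes v'')-\frac{1}{k+g}(T_{-m_1}w)\otimes v''\in W,$$
the first summand because $w\otimes v''\in W$ and $W$ is a submodule, the second because $T_{-m_1}w\in M_{\tilde{\mathfrak{g}}}(\lambda, k)$ and $M_{\tilde{\mathfrak{g}}}(\lambda, k)\otimes v''\subseteq W$ by the inductive hypothesis. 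Hence $W=M_{\tilde{\mathfrak{g}}}(\lambda, k)\otimes M_{\mathcal{V}}(l', c')=\widetilde{M}$. Finally, by the $\mathrm{PBW}$ theorem $U(\mathfrak{L(g)}_{-})\cong U(\tilde{\mathfrak{n}}_{-})\otimes U(\mathcal{V}_{-})$ as $\mathcal{H}$-graded vector spaces, so $M(\lambda, l, k, c)$ and $\widetilde{M}$ have the same, finite-dimensional, weight spaces with respect to $\mathcal{H}$; being grading-preserving and surjective, $\pi_1$ is then bijective on each weight space, hence an isomorphism.

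For $\pi_2$, I would first prove that $L:=L_{\tilde{\mathfrak{g}}}^{\mathfrak{L(g)}}(\lambda, k)\otimes L_{\mathcal{V}}^{\mathfrak{L(g)}}(l', c')$ is irreducible over $\mathfrak{L(g)}$. Write $V=L_{\tilde{\mathfrak{g}}}(\lambda, k)$; it is irreducible over $\tilde{\mathfrak{g}}$ with ${\rm End\,}_{\tilde{\mathfrak{g}}}(V)=\C$ (a $\tilde{\mathfrak{g}}$-endomorphism commutes with the Sugawara operator $T_0$ and hence preserves the one-dimensional top $d_0$-graded piece $\C u_{\lambda, k}$). Because $\tilde{\mathfrak{g}}$ acts on $L$ only through $V$, every $\tilde{\mathfrak{g}}$-submodule of $L$ has the form $V\otimes U_0$ for a subspace $U_0\subseteq L_{\mathcal{V}}(l', c')$; if such a submodule is also $\mathfrak{L(g)}$-stable, then from
$$v\otimes d_m u=d_m(v\otimes u)-\frac{1}{k+g}(T_m v)\otimes u$$
and $(T_m v)\otimes u\in V\otimes U_0$ one gets $d_m u\in U_0$ for every $m$ (taking $v\neq0$), so $U_0$ is a $\mathcal{V}$-submodule of the irreducible module $L_{\mathcal{V}}(l', c')$; thus $U_0\in\{0, L_{\mathcal{V}}(l', c')\}$ and $L$ is irreducible. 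Next, the maximal submodules of $M_{\tilde{\mathfrak{g}}}(\lambda, k)$ and $M_{\mathcal{V}}(l', c')$ are $\mathfrak{L(g)}$-stable (the former because the Sugawara operators act locally and so preserve every $\tilde{\mathfrak{g}}$-submodule, the latter because $\tilde{\mathfrak{g}}$ acts trivially), so tensoring the two quotient maps gives an $\mathfrak{L(g)}$-epimorphism $\widetilde{M}\twoheadrightarrow L$; composing with $\pi_1$ yields a nonzero $\mathfrak{L(g)}$-homomorphism $M(\lambda, l, k, c)\to L$ onto the irreducible $L$, whose kernel is therefore the unique maximal submodule $J(\lambda, l, k, c)$. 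Hence $L(\lambda, l, k, c)\cong L$, which is $\pi_2$.

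The step I expect to be the main obstacle is the irreducibility of $L_{\tilde{\mathfrak{g}}}^{\mathfrak{L(g)}}(\lambda, k)\otimes L_{\mathcal{V}}^{\mathfrak{L(g)}}(l', c')$; what makes it go through is that $\tilde{\mathfrak{g}}$ acts purely on the first tensorand while the Sugawara correction $\frac{1}{k+g}T_m$ to the action of $d_m$ also stays inside the first tensorand, so that the Virasoro direction $U_0$ of an arbitrary submodule is forced to be a genuine $\mathcal{V}$-submodule of an irreducible Virasoro module. Everything else is routine bookkeeping with the Sugawara relations recalled above and with $\mathrm{PBW}$.
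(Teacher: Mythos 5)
Your proof is correct, and it is genuinely different from what the paper does: the paper offers no argument for this lemma at all, importing it from Kac's conformal current algebra paper \cite{K}, so what you have written is a self-contained substitute. Your route is the natural one: verify by the Sugawara relations that $T_m u_{\lambda,k}=0$ for $m>0$ and $T_0u_{\lambda,k}=-\tfrac{1}{2}c_{\lambda}u_{\lambda,k}$, so that $u_{\lambda,k}\otimes u_{l',c'}$ is a highest weight vector of weight $(\lambda,l,k,c)$; get $\pi_1$ from the universal property; prove surjectivity by peeling off the $d_{-m}$'s via $w\otimes d_{-m}v''=d_{-m}(w\otimes v'')-\frac{1}{k+g}(T_{-m}w)\otimes v''$; and prove injectivity by comparing the finite-dimensional $\mathcal{H}$-weight spaces through the PBW factorization $U(\mathfrak{L(g)}_{-})\cong U(\tilde{\mathfrak{n}}_{-})\otimes U(\mathcal{V}_{-})$ (note this comparison genuinely needs the $\mathfrak{h}$-grading as well as the $d_0$-grading, as you use, since the pure $d_0$-eigenspaces are infinite-dimensional). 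The deduction of $\pi_2$ from the irreducibility of $L_{\tilde{\mathfrak{g}}}^{\mathfrak{L(g)}}(\lambda,k)\otimes L_{\mathcal{V}}^{\mathfrak{L(g)}}(l',c')$, together with the observation that the maximal submodules of the two Verma factors are $\mathfrak{L(g)}$-stable (Sugawara operators preserve $\tilde{\mathfrak{g}}$-submodules), is also sound, and the isotypic-decomposition trick forcing a submodule to be of the form $V\otimes U_0$ with $U_0$ a $\mathcal{V}$-submodule is the right mechanism; this buys the statement in the full generality $k\neq -g$ claimed by the lemma, without the dominance hypothesis used later in the paper.

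One small imprecision, easily repaired: in your justification of ${\rm End\,}_{\tilde{\mathfrak{g}}}(L_{\tilde{\mathfrak{g}}}(\lambda,k))\cong\C$, the top $T_0$-eigenspace is the whole degree-zero piece, i.e.\ the $\mathfrak{g}$-module generated by $u_{\lambda,k}$, which is in general not one-dimensional. To isolate $\C u_{\lambda,k}$ you should also use that the endomorphism commutes with $\mathfrak{h}\subset\tilde{\mathfrak{g}}$ and intersect the top $T_0$-eigenspace with the $\mathfrak{h}$-weight-$\lambda$ subspace (or simply invoke Dixmier's version of Schur's lemma for irreducible modules of countable dimension). The same refinement is what pins down $\phi(u_{\lambda,k})\in\C u_{\lambda,k}\otimes U$ when you classify $\tilde{\mathfrak{g}}$-submodules of the tensor product; with that one-line fix the argument is complete.
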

From now on, we assume that $\lambda+k \Lambda_0\in(\tilde{\mathfrak{h}}+\C d_0)^{*}$ is dominant, which implies $k\neq -g$. Set $D_n=-\frac{1}{k+g}T_n+d_n$ for any $n\in\Z$. It is easy to see that $L(\lambda, l, k, c)_{0}=L(\lambda)$ is just the finite-dimensional irreducible $\mathfrak{g}$-module with highest weight $\lambda$ , and we still use $\bar{u}$ to denote the highest weight vector of $L(\lambda)$ .
\begin{lemm}\label{coro-3}
The unique proper maximal submodule $J(\lambda, l, k, c)$ of  $M(\lambda, l, k, c)$ is generated by some singular vectors besides $\mathbb{C}u$.
\end{lemm}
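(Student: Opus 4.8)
The plan is to transfer the problem, via the isomorphism $\pi_1$ of Lemma~\ref{coro-2}, to the tensor product $M_{\tilde{\mathfrak{g}}}^{\mathfrak{L(g)}}(\lambda, k)\otimes M_{\mathcal{V}}^{\mathfrak{L(g)}}(l', c')$, where the two factors carry commuting structures: the affine part $\tilde{\mathfrak{g}}$ acts only on the first tensor factor, and the Virasoro-type operators $D_n=-\frac{1}{k+g}T_n+d_n$ act only on the second factor (this is precisely the content of the Sugawara construction, and $[\,D_n, x(m)\,]=0$ because $T_n$ reproduces the action of $(k+g)d_n$ on $M_{\tilde{\mathfrak{g}}}$). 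First I would record this commutation and observe that under $\pi_1$ the Verma module $M(\lambda,l,k,c)$ becomes, as a module over the subalgebra $\tilde{\mathfrak{g}}\oplus\langle D_n : n\in\Z\rangle$, an (outer) tensor product of a $\tilde{\mathfrak{g}}$-Verma module and a Virasoro Verma module, with highest weight vector $u\otimes u_{l',c'}$.

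Next I would invoke Lemma~\ref{coro-21}: the maximal submodule of $M_{\tilde{\mathfrak{g}}}(\lambda,k)$ is generated by the singular vectors $f_i^{\langle\lambda+k\Lambda_0,\check{\alpha}_i\rangle+1}u_{\lambda,k}$, $i=0,\ldots,r$, and the maximal submodule of $M_{\mathcal{V}}(l',c')$ is generated by at most two singular vectors $F_1^{(l',c')}u_{l',c'}$, $F_2^{(l',c')}u_{l',c'}$. Since the kernel of the canonical surjection $M(\lambda,l,k,c)\twoheadrightarrow L(\lambda,l,k,c)$ is $J(\lambda,l,k,c)$, and under $\pi_1$ and $\pi_2$ this surjection becomes the tensor product of the two canonical surjections $M_{\tilde{\mathfrak{g}}}\twoheadrightarrow L_{\tilde{\mathfrak{g}}}$ and $M_{\mathcal{V}}\twoheadrightarrow L_{\mathcal{V}}$, the kernel $J(\lambda,l,k,c)$ is $N_{\tilde{\mathfrak{g}}}\otimes M_{\mathcal{V}}+M_{\tilde{\mathfrak{g}}}\otimes N_{\mathcal{V}}$, where $N_{\tilde{\mathfrak{g}}}$ and $N_{\mathcal{V}}$ are the respective maximal submodules. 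Each of these two summands is generated, as a module over $\tilde{\mathfrak{g}}\oplus\langle D_n\rangle$ hence over $\mathfrak{L(g)}$, by the elements $w\otimes u_{l',c'}$ with $w$ a $\tilde{\mathfrak{g}}$-singular vector of $M_{\tilde{\mathfrak{g}}}$, respectively $u_{\lambda,k}\otimes w'$ with $w'$ a Virasoro-singular vector of $M_{\mathcal{V}}$. Pulling back along $\pi_1$, each such generator is a weight vector of $M(\lambda,l,k,c)$ annihilated by $\tilde{\mathfrak{n}}_+$ (because in one factor it is a $\tilde{\mathfrak{g}}$-singular vector and in the other it is a highest weight vector, and $d_n=D_n+\frac{1}{k+g}T_n$ acts on it through $D_n$ on the Virasoro factor plus the Sugawara operator, both of which kill it for $n>0$); hence it is a singular vector of $M(\lambda,l,k,c)$ in the sense defined before Lemma~\ref{coro-3}, and it is not in $\C u$. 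Therefore $J(\lambda,l,k,c)$ is generated by finitely many singular vectors other than $u$ itself, which is the assertion.

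The main obstacle I anticipate is the bookkeeping needed to verify that the pull-backs along $\pi_1$ of the chosen generators are genuinely $\mathfrak{L(g)}_+$-singular, i.e.\ killed not only by $\tilde{\mathfrak{n}}_+$ but by all of $d_1,d_2,\ldots$; this requires knowing that on $M_{\tilde{\mathfrak{g}}}(\lambda,k)$ the positive Sugawara operators $T_n$ ($n>0$) annihilate the $\tilde{\mathfrak{g}}$-singular vectors $f_i^{\langle\lambda+k\Lambda_0,\check{\alpha}_i\rangle+1}u_{\lambda,k}$ — which follows since these vectors are themselves highest weight vectors for $\tilde{\mathfrak{g}}$ and $T_n$ is built from the normally ordered product of $x_i(-j)y_i(j+n)$, so for $n>0$ every normally ordered monomial ends in an annihilation operator relative to such a vector — together with the fact that $D_n$ ($n>0$) kills the Virasoro highest weight vector $u_{l',c'}$ and the singular vectors $F_1^{(l',c')}u_{l',c'}$, $F_2^{(l',c')}u_{l',c'}$ by construction. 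Once this is in hand the rest is formal, and I would finish by remarking that if $k=-g$ the statement is vacuous or handled separately (but we have assumed $\lambda+k\Lambda_0$ dominant, so $k\neq -g$).
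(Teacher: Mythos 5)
Your proposal is correct and follows essentially the same route as the paper: both transfer the problem through $\pi_1$ to $M_{\tilde{\mathfrak{g}}}^{\mathfrak{L(g)}}(\lambda,k)\otimes M_{\mathcal{V}}^{\mathfrak{L(g)}}(l',c')$, identify $J(\lambda,l,k,c)$ with $J_{\tilde{\mathfrak{g}}}^{\mathfrak{L(g)}}\otimes M_{\mathcal{V}}^{\mathfrak{L(g)}}+M_{\tilde{\mathfrak{g}}}^{\mathfrak{L(g)}}\otimes J_{\mathcal{V}}^{\mathfrak{L(g)}}$ using Lemma \ref{coro-21} and the isomorphism $\pi_2$, and exhibit the generators $f_i^{\langle\lambda+k\Lambda_0,\check{\alpha_i}\rangle+1}u$ and $F_j(D_{-i_t}\cdots D_{-i_1})u$ as singular vectors. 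Your explicit verification that the positive Sugawara operators $T_n$ kill $\tilde{\mathfrak{g}}$-singular vectors is a welcome elaboration of the paper's ``quick calculation.''
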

\begin{proof}
Denote by $l'=l+\frac{c_{\lambda}}{2(k+g)}$ and $c'=c-\frac{k({\rm dim \,}\mathfrak{g})}{k+g}$ for short. From Lemma \ref{coro-21}, we know that the unique  maximal proper submodule $J_{\tilde{\mathfrak{g}}}(\lambda, k)$ of $M_{\tilde{\mathfrak{g}}}(\lambda, k)$ (resp. $J_{\mathcal{V}}(l', c')$ of $M_{\mathcal{V}}(l', c')$)
is generated by  singular vectors
\begin{align*}
f_i^{\langle\lambda+k\Lambda_0,\check{\alpha_i}\rangle+1}u_{\lambda, k},\,\,i=0,\ldots,r \,\,\big({\rm resp.\,} F_1^{(l', c')}(d_{-i_{t}}\cdots d_{-i_{1}})u_{l', c'}, F_2^{(l', c')}(d_{-j_{s}}\cdots d_{-j_{1}})u_{l', c'}\big).
 \end{align*}
We  use $u_{\lambda, -\frac{c_{\lambda}}{2(k+g)}, k, \frac{k({\rm dim \,}\mathfrak{g})}{k+g}}, u_{0, l', 0, c'}$ to denote the highest weight vector of $M_{\tilde{\mathfrak{g}}}^{\mathfrak{L(g)}}(\lambda, k)$ and $M_{\mathcal{V}}^{\mathfrak{L(g)}}(l', c')$, respectively. A quick calculation shows that the  proper submodule $J_{\tilde{\mathfrak{g}}}^{\mathfrak{L(g)}}(\lambda, k)$ of $M_{\tilde{\mathfrak{g}}}^{\mathfrak{L(g)}}(\lambda, k)$ (resp. $J_{\mathcal{V}}^{\mathfrak{L(g)}}(l', c')$ of $M_{\mathcal{V}}^{\mathfrak{L(g)}}(l', c')$)
is also generated by singular  vectors
$$f_i^{\langle\lambda+k\Lambda_0,\check{\alpha_i}\rangle+1}u_{\lambda, -\frac{c_{\lambda}}{2(k+g)}, k, \frac{k({\rm dim \,}\mathfrak{g})}{k+g}},\,\, i=0,\ldots,r$$ (resp. $F_1^{(l', c')}(d_{-i_{t}}\cdots d_{-i_{1}})u_{0, l', 0, c'}, F_2^{(l', c')}(d_{-j_{s}}\cdots d_{-j_{1}})u_{0, l', 0, c'}$).  Denote
$$J=J_{\tilde{\mathfrak{g}}}^{\mathfrak{L(g)}}(\lambda, k)\otimes M_{\mathcal{V}}^{\mathfrak{L(g)}}(l', c')+ M_{\tilde{\mathfrak{g}}}^{\mathfrak{L(g)}}(\lambda, k)\otimes J_{\mathcal{V}}^{\mathfrak{L(g)}}(l', c')$$
a submodule of $M_{\tilde{\mathfrak{g}}}^{\mathfrak{L(g)}}(\lambda, k)\otimes M_{\mathcal{V}}^{\mathfrak{L(g)}}(l', c')$. It follows from direct calculations that $J$ is generated by the following singular vectors
\begin{eqnarray*}
&f_{i}^{\langle\lambda+k\Lambda_{i},\check{\alpha_{0}}\rangle+1}u_{\lambda, -\frac{c_{\lambda}}{2(k+g)}, k, \frac{k({\rm dim \,}\mathfrak{g})}{k+g}}\otimes u_{0, l', 0, c'}, \,\, i=0,\ldots,r, \\
&u_{\lambda, -\frac{c_{\lambda}}{2(k+g)}, k, \frac{k({\rm dim \,}\mathfrak{g})}{k+g}}\otimes F_{1}^{(l', c')}(d_{-i_{t}}\cdots d_{-i_{1}})u_{0,l', 0,c'},\\
&u_{\lambda, -\frac{c_{\lambda}}{2(k+g)}, k, \frac{k({\rm dim \,}\mathfrak{g})}{k+g}}\otimes F_{2}^{(l', c')}(d_{-j_{s}}\cdots d_{-j_{1}})u_{0,l', 0,c'}.
\end{eqnarray*}
In view of  the $\mathfrak{L(g)}$-module isomorphism $\pi_1$ in Lemma \ref{coro-2}, $\pi_1^{-1}(J)$ is generated by singular vectors $f_{i}^{\langle\lambda+k\Lambda_{i},\check{\alpha_{0}}\rangle+1}u$, $F_{1}^{(l', c')}(D_{-i_{t}}\cdots D_{-i_{1}})u$, $F_{2}^{(l', c')}(D_{-j_{s}}\cdots D_{-j_{1}})u$ for $i=0,\ldots, r$. What remains is to show that $J$ is the maximal submodule of $ M_{\tilde{\mathfrak{g}}}^{\mathfrak{L(g)}}(\lambda, k)\otimes M_{\mathcal{V}}^{\mathfrak{L(g)}}(l', c')$. From
\begin{align*}
&M_{\tilde{\mathfrak{g}}}^{\mathfrak{L(g)}}(\lambda, k)/J_{\tilde{\mathfrak{g}}}^{\mathfrak{L(g)}}(\lambda, k)\cong L_{\tilde{\mathfrak{g}}}^{\mathfrak{L(g)}}(\lambda, k),\\
&M_{\mathcal{V}}^{\mathfrak{L(g)}}(l', c')/J_{\mathcal{V}}^{\mathfrak{L(g)}}(l', c')\cong L_{\mathcal{V}}^{\mathfrak{L(g)}}(l', c')
\end{align*} and  the $\mathfrak{L(g)}$-module isomorphism $\pi_2$ in Lemma \ref{coro-2}, we obtain
\begin{align*}
\big(M_{\tilde{\mathfrak{g}}}^{\mathfrak{L(g)}}(\lambda, k)\otimes M_{\mathcal{V}}^{\mathfrak{L(g)}}(l', c')\big)/J
=L_{\tilde{\mathfrak{g}}}^{\mathfrak{L(g)}}(\lambda, k)\otimes L_{\mathcal{V}}^{\mathfrak{L(g)}}(l', c')\cong L(\lambda, l, k, c).
\end{align*}
This implies that $J(\lambda, l, k, c)=\pi_1^{-1}(J)$. We complete the proof.
%Take any non-zero element $w=\sum_{i=1}^{s}u_i\otimes v_i\in J'$ with $u_s,v_s\neq\bar{0}$. We proceed by induction on $s$. Consider first the situation when $s=1$. Applying $U(\mathfrak{g})$ to $w$ , we get $u_1\otimes L_{\mathcal{V}}^{\mathfrak{L(g)}}(l', c')\subseteq J'$, which together with $U(\mathcal{V})(u_1\otimes L_{\mathcal{V}}^{\mathfrak{L(g)}}(l', c'))\subseteq J'$ implies that $U(\mathfrak{L(g)})w=J'$. So, $J'$ is irreducible. Assume that $U(\mathfrak{L(g)})w=J'$ for all $w$ with  number items is less than $s$.
\end{proof}

\begin{rema}\label{rem for Ann}
As a direct consequence of Lemma \ref{coro-3}, we know that  ${\rm Ann\,}_{U(\mathfrak{L(g)}_{-})}(\bar{u})$ is the left ideal of $U(\mathfrak{L(g)}_{-})$ generated by $f_i^{<\lambda+k\Lambda_0, \check{\alpha_i}>+1}, E_j$ for $i=0,\ldots, r, j=1,2,$ where
$$F_1^{(l+\frac{c_{\lambda}}{2(k+g)}, c-\frac{k({\rm dim \,}\mathfrak{g})}{k+g})}(D_{-i_{t}}\cdots D_{-i_{1}})u=E_1u,\quad F_2^{(l+\frac{c_{\lambda}}{2(k+g)}, c-\frac{k({\rm dim \,}\mathfrak{g})}{k+g})}(D_{-j_{s}}\cdots D_{-j_{1}})u=E_2u.$$
\end{rema}

In order to illustrate Theorem \ref{theorem2}, we give the following two examples in which $\mathfrak{g}$ is the simple Lie algebra $\mathfrak{sl}_2={\rm span\,}_{\C}\{e,f,h\}$ and the corresponding Lie algebra $\mathfrak{L(sl_2)}$ is just the affine-Virasoro algebra of type $A_1$. Let $\epsilon$ be the fundamental weight of $\mathfrak{sl}_2$. Thus, $g=2$ and $c_{i\epsilon}=\frac{1}{2}(i^2+2i)$ for $i\in\Z_+$.
\begin{exam}\label{exam1}
Let $b-a\not\in\Z$ and $(\lambda, l,k, c, \mu)=(0\epsilon, 0, 1, 2, m\epsilon)$ with $m\in\Z_+$. In this case, we have  $L(\lambda, l, k, c)_{0}=L(0\epsilon)$ is a one-dimensional trivial module and $(l+\frac{c_{\lambda}}{2(k+g)}, c-\frac{3k}{k+g})=(0, 1)$. From \cite[Theorem A]{A}, we see that $J_{\mathcal{V}}(0, 1)=U(\mathcal{V}_{-})d_{-1}u_{0,1}$. Then ${\rm Ann\,}_{U(\mathfrak{L(sl_2)}_{-})}(\bar{u})$ is the left ideal of $U(\mathfrak{L(sl_2)}_{-})$ generated by $D_{-1}, f_0^2, f_1$. We have the following explicit computations
\begin{align*}
0=&D_{-1}\bar{u}=\big(-\frac{1}{3}(-\frac{1}{2}\sum_{i\in\Z}:x(-i)y(i-1)+h(-i)\frac{h}{2}(i-1)+y(-i)x(i-1)):+d_{-1}\big)\bar{u}\nonumber\\
=&(\frac{1}{3}x(-1)y+\frac{1}{6}h(-1)h+\frac{1}{3}y(-1)x+d_{-1})\bar{u}\nonumber\\
=&d_{-1}\bar{u}.
\end{align*}
%Owing to  $b-a\not\in\Z$, it suffices to consider some elements in left ideal of $U(\mathfrak{L(sl_2)}_{-})$ generated by $D_{-1}$ ($d_{-1}$).
For any $n\in\Z$, since
\begin{align*}
&\varphi_n( U(\mathfrak{n}_{-}) d_{-1}\otimes L(m\epsilon))\\
=&\psi_n(d_{-1})\C[f]\bar{u}\otimes  L(m\epsilon)\\
=&(b-a-n-1)L(0\epsilon)\otimes L(m\epsilon)\\
=&L(0\epsilon)\otimes L(m\epsilon),
\end{align*}
the tensor product $\mathfrak{L(sl_2)}$-module $L_{0\epsilon, 0, 1, 2}^{m\epsilon, a,b}$ is irreducible by Theorem \ref{theorem2}.
\end{exam}
%\begin{exam}\label{exam2}
%Let $(\lambda, l,k, c, \mu)= (\epsilon, -1, 1, \frac{3}{4}, m\epsilon)$  with $m\in\Z_+$. In this case, we have  $L(\lambda, l, k, c)_{0}=L(\epsilon)$ and $(l+\frac{k({\rm dim \,}\mathfrak{g})}{k+g}, c+\frac{c_{\lambda}}{2(k+g)})=(0, 1)$. From \cite[Theorem A]{A}, we see that $J_{\mathcal{V}}(0, 1)=U(\mathcal{V}_{-})d_{-1}u_{0,1}$. Then ${\rm Ann\,}_{U(\mathfrak{L(sl_2)}_{-})}(\bar{u})$ is the left ideal of $U(\mathfrak{L(sl_2)}_{-})$ generated by $D_{-1}, f_0, f_1^2$. Explicit computations show that
%\begin{align}\label{cvb}
%D_{-1}\bar{u}=&\big(-\frac{1}{3}(\frac{1}{2}\sum_{i\in\Z}x(-i)y(i-1)+h(-i)\frac{h}{2}(j-1)+y(-i)x(i-1))+d_{-1}\big)\bar{u}\nonumber\\
%=&(-\frac{1}{3}x(-1)y-\frac{1}{6}h(-1)h-\frac{1}{3}y(-1)x+d_{-1})\bar{u}\nonumber\\
%=&(-\frac{1}{3}x(-1)y-\frac{1}{6}h(-1)+d_{-1})\bar{u}=0.
%\end{align}
%Also, we just consider the left ideal of $U(\mathfrak{L(sl_2)}_{-})$ generated by $D_{-1}$. For any $n\in\Z$,
%\begin{align*}
%&\varphi_n( U(\mathfrak{g}\otimes t^{-1}\C[t^{-1}]) U(\mathcal{V}_{-}) U(\mathfrak{n}_{-})(-\frac{1}{3}x(-1)y-\frac{1}{6}h(-1)+d_{-1})\otimes L(m\epsilon))\\
%=&\psi_n(U(\mathcal{V}_{-}))\C[f]\psi_n(d_{-1})\bar{u}\otimes  U(\mathfrak{g})\circ L(m\epsilon)\\
%=&(b-a-n-1)\psi_n(U(\mathcal{V}_{-}))L(0)\otimes L(m\epsilon)\\
%=&L(0)\otimes L(m\epsilon)
%\end{align*}
%\end{exam}

\begin{exam}\label{exam3}
Let $(\lambda, l,k, c, \mu)=(2\epsilon, \frac{3}{2}, 2, \frac{5}{2}, 3\epsilon)$ and $a,b\in\C$. In this case,
we have  $L(\lambda, l, k, c)_{0}=L(2\epsilon)$ and $(l+\frac{c_{\lambda}}{2(k+g)}, c-\frac{3k}{k+g})=(2, 1)$. From Kac determinant formula, we know that $M_{\mathcal{V}}(2, 1)$ is irreducible. So, ${\rm Ann\,}_{U(\mathfrak{L(sl_2)}_{-})}(\bar{u})$ is the left ideal of $U(\mathfrak{L(sl_2)}_{-})$ generated by $f_0, f_1^3$. For any $n\in\Z$, we have
\begin{align*}
&\varphi_n(U(\mathfrak{n}_{-}) f_0+U(\mathfrak{n}_{-})f_1^3)\otimes L(3\epsilon))\\
=&\varphi_n( U(\mathfrak{n}_{-}) f_0\otimes L(3\epsilon))\\
=&\varphi_n(\C[f]f_0\otimes  L(3\epsilon)).
\end{align*}
Using this and  the same arguments as in \cite[Theorems 7.8, 7.9, Corollary 7.10]{GZ} (in more general situations.), we can prove that
\begin{align*}
&L_{2\epsilon, \frac{3}{2}, 2, \frac{5}{2}}^{3\epsilon, a,b} \,\,{\rm is \,\, irreducible}\\
\Longleftrightarrow &\varphi_n(\C[f]f_0\otimes L(3\epsilon))=L(2\epsilon)\otimes L(3\epsilon)\\
\Longleftrightarrow &\C[f](\bar{u}\otimes eL(3\epsilon))=L(2\epsilon)\otimes L(3\epsilon)
\end{align*}
and $\C[f](\bar{u}\otimes eL(3\epsilon))$ contains all highest weight vectors of $L(2\epsilon)\otimes L(3\epsilon)$. Therefore, the tensor product $\mathfrak{L(sl_2)}$-module $L_{2\epsilon, \frac{3}{2}, 2, \frac{5}{2}}^{3\epsilon, a,b}$ is irreducible.
\end{exam}
\begin{rema}
As all weight spaces of  $L(\lambda, l, k, c)\otimes L_{a,b}(\mu)$ are infinite-dimensional and each weight vector of the positive parts of $\mathfrak{L(g)}$ acts non-locally finitely on the  tensor product modules, the tensor product modules are different from the other known irreducible weight modules (cf. \cite{K, LPX, T}). Hence, these tensor product modules are new irreducible weight modules.
\end{rema}

%\begin{theo}\label{theoo1121}
%$L_{\lambda, l, k, c}^{\mu, a,b}$  is irreducible if and only if $\varphi_n( U(\mathfrak{n}_{-})f_0^{k-<\lambda,\check{\theta}>+1}\otimes L(\mu))=L(\lambda)\otimes L(\mu)$ for all  $n\in\Z$.
%\end{theo}

\section{Isomorphism classes of the tensor product modules}
Let $\lambda,\lambda'\in \mathfrak{h}^*,\mu,\mu'\in\Lambda^{+}\setminus\{0\}$ and $k,c,l,a,b,k',c',l',a',b'\in\C$ with $0\leq{\rm Re\,}a,{\rm Re\,}a'<1$. In this section, we will determine the necessary and sufficient conditions for two tensor product modules $L(\lambda, l, k, c)\otimes L_{a,b}(\mu)$ and  $L(\lambda', l', k', c')\otimes L_{a',b'}(\mu')$ to be isomorphic. As before, we identify  $L(\lambda, l, k, c)\otimes L_{a,b}(\mu)$ and $L(\lambda', l', k', c')\otimes L_{a',b'}(\mu')$   with  $L_{\lambda, l, k, c}^{\mu, a,b}$ and $L_{\lambda', l', k', c'}^{\mu', a',b'}$    , respectively, via \eqref{equ21}.
\begin{theo}\label{theoo21}
Let $\lambda,\lambda'\in \mathfrak{h}^*,\mu,\mu'\in\Lambda^{+}\setminus\{0\}$ and $k,c,l,a,b,k',c',l',a',b'\in\C$ with $0\leq{\rm Re\,}a,{\rm Re\,}a'<1$. Then $L_{\lambda, l, k, c}^{\mu, a,b}\cong L_{\lambda', l', k', c'}^{\mu', a',b'}$ if and only if $(\lambda, l, k, c, \mu, a, b)=(\lambda', l', k', c', \mu', a', b')$.
\end{theo}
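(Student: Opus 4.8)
The plan is to prove the two directions separately, with the nontrivial direction being that an isomorphism $\Phi\colon L_{\lambda,l,k,c}^{\mu,a,b}\to L_{\lambda',l',k',c'}^{\mu',a',b'}$ forces all seven parameters to coincide. The backward implication is trivial, so assume such a $\Phi$ exists. The first step is to exploit the weight-space structure with respect to $d_0$ and $\mathbf{k},\mathbf{c}$. Since $\Phi$ commutes with the Cartan subalgebra $\mathcal{H}$, and since the $d_0$-weights of $L_{\lambda,l,k,c}^{\mu,a,b}$ form the coset $a+l+\mathbb{Z}$ while those of the target form $a'+l'+\mathbb{Z}$, we get $a+l\equiv a'+l'\pmod{\mathbb{Z}}$; reading off the central characters immediately gives $k=k'$ and $c=c'$. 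Next I would use the fact (from the shifting-technique description) that $L(\lambda,l,k,c)\otimes L(\mu)\otimes t^n$ is precisely the $d_0$-weight space of weight $a+l+n$; comparing the smallest real parts of the $\mathfrak{h}$-weights appearing, or more directly the unique (up to the loop grading) occurrence of the ``top'' piece, one pins down $a=a'$ (using $0\le \mathrm{Re}\,a,\mathrm{Re}\,a'<1$) and $l=l'$.

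The second, more structural step is to recover $\lambda$, $\mu$, and $b$. Here I would look at how each weight space decomposes as a $\mathfrak{g}$-module. In the weight space $L(\lambda,l,k,c)\otimes L(\mu)\otimes t^n$, the ``top'' layer $L(\lambda,l,k,c)_l\otimes L(\mu)\otimes t^n = L(\lambda)\otimes L(\mu)$ can be intrinsically characterized (for instance, as the vectors killed by all $d_m$, $x(m)$ with $m$ sufficiently large — compare the computation in the proof of Theorem~\ref{prop1}), and $\Phi$ must carry this layer in the source to the analogous layer in the target. Decomposing $L(\lambda)\otimes L(\mu)\cong L(\lambda')\otimes L(\mu')$ as $\mathfrak{g}$-modules and comparing extremal weights (the highest weight is $\lambda+\mu$ on one side and $\lambda'+\mu'$ on the other; a Clebsch–Gordan / PRV-type argument then separates the two summands since $L(\lambda)$ is the highest-weight piece while $L(\mu)$ carries the loop-grading shift $b$) yields $\lambda=\lambda'$ and $\mu=\mu'$. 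Finally, $b$ is extracted from the precise $d_m$-action on the loop variable: on the top layer, $d_m$ acts by $(a+bm+n)$ on $L(\mu)\otimes t^n$, so once $a$, $\mu$, and the identification of layers are fixed, matching the coefficient of $m$ gives $b=b'$.

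The main obstacle I anticipate is the clean separation of the ``$L(\lambda)$ factor'' from the ``$L(\mu)$ factor'' inside a single weight space: a priori $\Phi$ need not respect the tensor decomposition, only the $\mathfrak{L(g)}$-action. The resolution is to characterize the two factors operator-theoretically rather than combinatorially — the subspace $\bar u\otimes L(\mu)\otimes t^n$ consists of the highest-weight-for-$\mathfrak{h}$ vectors of the top layer that are moreover annihilated by $\mathfrak{g}_\alpha$-raising in a way detecting $\bar u$, and then $L(\mu)$ appears as the $\mathfrak{g}$-submodule generated from it by the ``diagonal'' $\mathfrak{g}$-action visible through $x(m)$ for large $m$ (which acts as $1\otimes x\otimes 1$ on the top layer, by \eqref{equ1}). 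Once one knows $\Phi$ sends $\bar u\otimes v_\mu\otimes t^{n}$ to a scalar multiple of $\bar u'\otimes v_{\mu'}\otimes t^{n+n_0}$ for some fixed shift $n_0$ (forced to be $0$ by the $\mathrm{Re}\,a$ normalization), the remaining identifications $\lambda=\lambda'$, $\mu=\mu'$, $b=b'$ drop out of comparing the $\mathcal{H}$-weight and the $d_m$-action on this distinguished vector, and Lemma~\ref{lemma1} shows $\Phi$ is then determined. This mirrors the isomorphism arguments for the analogous tensor product modules over the Virasoro and affine Kac–Moody algebras in \cite{CGZ,GZ}, which I would cite for the routine parts.
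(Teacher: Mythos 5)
Your overall architecture --- track the image of the distinguished vectors $\bar u\otimes v_\mu\otimes t^p$, show it is a scalar multiple of $\bar u'\otimes v_{\mu'}\otimes t^p$, and then read off the parameters from the $\mathcal{H}$- and $d_m$-actions --- is the same as the paper's, and the endgame is fine once that key claim is in hand. But the step on which everything hinges is not established by your argument, and the auxiliary claims you use to set it up are incorrect as stated. First, the top layer is \emph{not} ``the vectors killed by all $d_m$, $x(m)$ with $m$ sufficiently large'': by \eqref{equ1}, for $m\gg0$ one has $x(m)(P\bar u\otimes v\otimes t^n)=P\bar u\otimes xv\otimes t^{m+n}$ and $d_m(P\bar u\otimes v\otimes t^n)=(a+n-{\rm deg}\,P+bm)P\bar u\otimes v\otimes t^{m+n}$, so (since $\mu\neq0$) no nonzero vector is annihilated by all $x(m)$, and the top layer is not annihilated by $d_m$ either. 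The operator that does detect ${\rm deg}\,P=0$ --- the combination $(a+n+m+bm)y(2m)-d_my(m)$ from the proof of Theorem \ref{prop1} --- has coefficients built from $a$ and $b$, exactly the parameters you have not yet shown to coincide, so you cannot transport it through $\Phi$ without circularity. Second, the weight coset only gives $(a+l)-(a'+l')\in\Z$; since $l,l'$ are arbitrary complex numbers, the normalization $0\le{\rm Re\,}a,{\rm Re\,}a'<1$ cannot separate $a=a'$ from $l=l'$ at that early stage. Third, the Clebsch--Gordan/PRV detour is both insufficient (an isomorphism $L(\lambda)\otimes L(\mu)\cong L(\lambda')\otimes L(\mu')$ of $\mathfrak{g}$-modules does not determine the two factors, and $\lambda$ need not be dominant, so the degree-zero piece of $L(\lambda,l,k,c)$ may be infinite dimensional) and unnecessary.

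The paper closes this gap without any intrinsic characterization of the top layer: write $\phi(\bar u\otimes v_\mu\otimes t^p)=\sum_{i=0}^{r}R_i\bar u'\otimes v_{\mu'}\otimes t^{p'}$ with $R_i$ homogeneous of degree $-i$ (the $L(\mu')$-components are forced into $\C v_{\mu'}$ by applying $\mathfrak{n}_{+}(m)$ for $m\gg0$), and then compute $\phi(\bar u\otimes xv_\mu\otimes t^{p+m+n})$ in two ways, via $x(m+n)$ and via $d_mx(n)$. Comparing the resulting eigenvalue identities $(a+p+n+bm)=(a'+p'+n+i+b'm)$ as polynomials in $m,n$ forces a single surviving degree $i=r$, gives $b=b'$ and $a+p=a'+p'+r$; the normalization then yields $a=a'$ and $p=p'+r$, and repeating the argument for $\phi^{-1}$ gives $r=0$ (your ``$n_0$ forced to be $0$ by the ${\rm Re\,}a$ normalization'' silently needs this two-sided step). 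After that, $\mu=\mu'$ and $\lambda=\lambda'$ follow by applying $h(n)$ with $n>0$ (which sees only $\mu$) and $h=h(0)$ (which sees $\lambda+\mu$) to the distinguished vector, with no tensor-product decomposition required. As written, your proposal does not contain a correct proof of the crucial claim that $\Phi$ preserves the top layer, so it has a genuine gap.
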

\begin{proof}
The sufficiency is obvious and it suffices to show the necessity.  Let $\bar{u}\,({\rm resp.\,} \bar{u}')$ and $v_{\mu}\,({\rm resp.\,}v_{\mu'})$ be the highest weight vectors of
$$L(\lambda, l, k, c)\,({\rm resp.\,} L(\lambda', l', k', c'))\quad {\rm and\,}\quad L(\mu)\,({\rm resp.\,} L(\mu')).$$  Assume that $\phi:L_{\lambda, l, k, c}^{\mu, a,b}\rightarrow L_{\lambda', l', k', c'}^{\mu', a',b'}$ is an $\mathfrak{L(g)}$-module  isomorphism. It is easy to see that $k=k',c=c'$. Fix any $p\in\Z$. As $\phi(\bar{u}\otimes v_{\mu}\otimes t^p)$ and $\bar{u}\otimes v_{\mu}\otimes t^p$ are of the same weight with respect to the action of $d_0$, we can assume that $\phi(\bar{u}\otimes v_{\mu}\otimes t^p)=\sum_{i=0}^sw_i\otimes v_i\otimes t^{p'}$ with
\begin{equation}\label{vnma}
a+l+p=a'+l'+p',
\end{equation}
$0\neq v_i\in L(\mu')$ and $w_0,\ldots,w_s\in  L(\lambda', l', k', c')$ such that $w_0,\ldots,w_s$ are linearly independent.
There exists $m_0\in\Z_+$ such that $\mathfrak{n}_{+}(m)(w_i)=0$ for $m\geq m_0$ and $i=0,\ldots,s$. From
$$0=\phi(\mathfrak{n}_{+}(m)(\bar{u}\otimes v_{\mu}\otimes t^p))=\mathfrak{n}_{+}(m)\phi(\bar{u}\otimes v_{\mu}\otimes t^p)=\sum_{i=0}^sw_i\otimes \mathfrak{n}_{+}v_i\otimes t^{p'+m},\ \ \ \ \forall m\geq m_0$$
and the linear independence of $w_0,\ldots,w_s$, we know that $\mathfrak{n}_{+}v_i=0$ for $i=0,\ldots,s$. This allows us to assume that
$\phi(\bar{u}\otimes v_{\mu}\otimes t^p)=\sum_{i=0}^rR_i\bar{u}'\otimes v_{\mu'}\otimes t^{p'}$ with ${\rm deg\,}R_i\in U(\mathfrak{L(g)}_{-})_{-i}$ for $0\leq i\leq r$ and $R_l\bar{u}'\neq 0$. For any $m,n\geq r+1,x\in\mathfrak{n}_{-}$, we have
\begin{align*}
&\phi(\bar{u}\otimes xv_{\mu}\otimes t^{p+m+n})\\
=&\phi(x(m+n)(\bar{u}\otimes v_{\mu}\otimes t^p))\\
=&x(m+n)\phi(\bar{u}\otimes v_{\mu}\otimes t^p)\\
=&\sum_{i=0}^rR_i\bar{u}'\otimes xv_{\mu'}\otimes t^{p'+m+n}
\end{align*}
and
\begin{align*}
&(a+p+n+bm)\phi(\bar{u}\otimes xv_{\mu}\otimes t^{p+m+n})\\
=&\phi(d_mx(n)(\bar{u}\otimes v_{\mu}\otimes t^p))\\
=&d_mx(n)\phi(\bar{u}\otimes v_{\mu}\otimes t^p)\\
=&(a'+p'+n+i+b'm)\sum_{i=0}^rR_i\bar{u}'\otimes xv_{\mu'}\otimes t^{p'+m+n}.
\end{align*}
Comparing the above two equations, we obtain that
$$a+p+n+bm=a'+p'+n+r+b'm,\ \ \ \ \forall m,n\geq r+1.$$
Since this equation holds for all $m\geq r+1$, we have
$$a=a', b=b', p=p'+r.$$
The last equality means  $p\geq p'$. By considering  $\phi^{-1}$ in the above arguments, we deduce that $p'\geq p$, implying $p=p'$ and then $r=0$. It follows immediately
from \eqref{vnma} that $l=l'$. Therefore, for any $p\in\Z$, we get $\phi(\bar{u}\otimes v_{\mu}\otimes t^p)=R_0\bar{u}'\otimes v_{\mu'}\otimes t^{p}$ and we will replace $R_0$ with $R_{0,p}$ because it also depends on $p$. Recall from the proof of Lemma  \ref{lemma1} that
$$\sum_{p\in\Z}U(\mathfrak{L(g)}_{-})U(\mathfrak{L(g)}_{+})(\bar{u}\otimes v_{\mu}\otimes t^p)=L_{\lambda, l, k, c}^{\mu, a, b}.$$
This along with the fact that $\phi$ is an isomorphism gives that
$$\sum_{p\in\Z}U(\mathfrak{L(g)}_{-})U(\mathfrak{L(g)}_{+})(R_{0,p}\bar{u}'\otimes v_{\mu'}\otimes t^p)=L_{\lambda', l', k', c'}^{\mu', a', b'},$$
from which we know that $R_{0,p}\bar{u}'$ is a nonzero scalar multiple of $\bar{u}'$, that is, there exist $r_p\in\C^*$ such that $\phi(\bar{u}\otimes v_{\mu}\otimes t^p)=r_p\bar{u}'\otimes v_{\mu'}\otimes t^{p}$ for all $p\in\Z$. For any $n\in\N, h\in \mathfrak{h}$, we have
\begin{align*}
&\mu(h)r_{p+n}\bar{u}'\otimes v_{\mu'}\otimes t^{p+n}=\phi(h(n)(\bar{u}\otimes v_{\mu}\otimes t^p))\\
&\ \ \ \ =h(n)\phi(\bar{u}\otimes v_{\mu}\otimes t^p)=\mu'(h)r_p\bar{u}'\otimes v_{\mu'}\otimes t^{p+n}.
\end{align*}
This yields $\mu(h)r_{p+n}=\mu'(h)r_p$, from which one can deduce that $\mu=\mu'$. For any $h\in\mathfrak{h}$, from
\begin{align*}
&(\lambda(h)+\mu(h))r_p\bar{u}'\otimes v_{\mu'}\otimes t^{p}=\phi(h(\bar{u}\otimes v_{\mu}\otimes t^p))\\
&\ \ \ \ =h\phi(\bar{u}\otimes v_{\mu}\otimes t^p)=(\lambda'(h)+\mu(h))r_p\bar{u}'\otimes v_{\mu'}\otimes t^{p},
\end{align*}
we see that $\lambda=\lambda'$, completing the proof.

\end{proof}

\subsection*{Acknowledgements}
Qiufan Chen would like to thank  professor Haibo Chen  for raising the problem in the discussion.


\begin{thebibliography}{9999}

\bibitem{A} A. Astashkevich, On the structure of Verma modules over Virasoro and Neveu-Schwarz algebras, {\it Commun. Math. Phys.} {\bf 186}, 531--562 (1997).

\bibitem{B} Y. Billig,  A category of modules for the full toroidal Lie algebra, {\it Int. Math. Res. Notices}, 46pp (2006).

\bibitem{CGZ} H. Chen,  X. Guo, K. Zhao, Tensor product weight modules over the Virasoro algebra, {\it J. Lond. Math. Soc.} {\bf 88}, 829--844 (2013).

\bibitem{CHSY} H. Chen,  J. Han, Y. Su, X. Yue, Two classes of non-weight modules over the twisted Heisenberg-Virasoro algebra, {\it Manuscripta. Math.} {\bf 160}(1-2), 265--284 (2019).

%\bibitem{CC} Q. Chen,  Y. Cai, Modules over algebras related to the Virasoro algebra,  {\it Internat. J. Math.} {\bf 26}, 1550070 (2015).

\bibitem{CH} Q. Chen,  J. Han, Non-weight modules over the affine-Virasoro algebra of type $A_1$,  {\it J. Math. Phys.} {\bf 60}, 071707 (2019).

\bibitem{CY1} Q. Chen,  Y. Yao, Irreducible tensor product modules over the affine-Virasoro algebra of type $A_1$,  {\it Linear Multilinear A.} {\bf 70}(22), 7558--7574 (2022).

\bibitem{CY2} Q. Chen, Y. Yao, A new class of irreducible modules over the affine-Virasoro algebra of type $A_1$, {\it J. Algebra} {\bf 608}, 553--572 (2022).



\bibitem{CL} S. Cheng,  N. Lam, Finite conformal modules over the $N=2,3,4$ superconformal algebras,  {\it J. Math. Phys.} {\bf 42}(2), 906--933 (2001).

\bibitem{CK} S. Cheng, V. Kac, Conformal modules, {\it Asian J. Math.} {\bf 1}, 181--193  (1997) .

%\bibitem{DL} C. Dong, J. Lepowsky, Generalized vertex algebras and relative vertex operators,
%{\it Progress in Math.} {\bf 112}, Birkhauser, Boston (1993).

\bibitem{EJ} S. Eswara Rao, C. Jiang, Classification of irreducible integrable representations for the full toroidal Lie algebras, {\it J. Pure Appl. Algebra} {\bf 200}, 71--85 (2005).

\bibitem{EFK} P. Etingof, I. Frenkel, A. Kirillov, Lectures on representation theory and Knizhnik-Zamolodchikov equations, Mathematical Surveys and Monographs 58, American Mathematical Society, Providence, 1998.

\bibitem{FF} B. Feigin, D. Fuchs, Representations of the Virasoro algebra, Representation of Lie groups and related topics, Adv. Stud. Contemp. Math. 7, Gordon and Breach, New York, 465--554, 1990.


\bibitem{FMS} P. Francesco, P. Mathieu, D. Senechal, Conformal field theory, Graduate Texts in Contemporary Physics, Springer-Verlag, New York, 1997.

\bibitem{GZ1} D. Gao, K. Zhao, Tensor product weight modules for the mirror Heisenberg-Virasoro algebra, {\it J. Pure Appl. Algebra} {\bf 226}, 106929  (2022).


\bibitem{GLW} X. Guo, X. Liu, J. Wang, New irreducible tensor product modules for the Virasoro algebra, {\it Asian J. Math.} {\bf 24}(2), 191--206 (2020).

\bibitem{GZ} X. Guo, K. Zhao, Irreducible representations of non-twisted affine Kac-Moody algebras, arXiv:1305.4059v2.


\bibitem{IK} K. Iohara, Y. Koga, Representation theory of the Virasoro algebra, Springer Monographs in Mathematics, Springer, London, 2011.


\bibitem{JY} C. Jiang, H. You,  Irreducible representations for the affine-Virasoro Lie algebras of type $B_{l}$,  {\it Chinese Ann. Math. Ser. B} {\bf  25}(3), 359--368 (2004).


\bibitem{K} V. Kac, Highest weight representations of conformal current algebras, {\it Symposium on Topological and Geometric Methods in Field Theory$,$ Espoo$,$ Finland$,$ World Scientific}, 3--16 (1986).

\bibitem{KV}  V. Kac, Infinite-dimensional Lie algebras, 3rd ed. Cambridge University Press, Cambridge, 1990.

\bibitem{KR} V. Kac, A. Raina, N. Rozhkovskaya, Bombay Lectures on Highest Weight Representations of Infinite
Dimensional Lie Algebras, 2nd edition, World Scientific Publishing Co. Pte. Ltd., 2013.

\bibitem{K1} G. Kuroki, Fock space representations of an affine Lie algebras and integral representations in the Wess-Zumino-Witten models, {\it Comm. Math. Phys.} {\bf 142}(3), 511--542 (1991).

\bibitem{LL} J. Lepowsky, H. Li, Introduction to Vertex operator algebras and their representations, Progress in Math., Vol. 227, Birkh\"{a}user, Boston, 2004.

\bibitem{LPX} D. Liu, Y. Pei,  L. Xia, Classification of quasi-finite irreducible modules over affine-Virasoro algebras, {\it J. Lie  Theory} {\bf 31}(2), 575--582 (2021).


\bibitem{LQ} X. Liu,  M.  Qian, Bosonic Fock representations of the affine-Virasoro algebra, {\it J. Phys. A} {\bf 27}(5), 131--136 (1994).


\bibitem{LZX} D. Liu, X. Zhang, Tensor product weight modules of Schr\"{o}dinger-Virasoro algebra, {\it Front. Math. China} {\bf 424}, 506--521 (2019).

\bibitem{LZ} R. L\"{u},  K. Zhao, Generalized oscillaror representations of the twisted Heisenberg-Virasoro algebra, {\it Algebr. Represent. Th.}  {\bf 23}, 1417--1442 (2020).
%\bibitem{MW}  V. Mazorchuk, E. Weisner, Simple Virasoro modules induced from codimension one subalgebras of the positive part, {\it  Proc. Amer. Math. Soc}. {\bf 142}(11), 3695--3703 (2012).
%
%\bibitem{MZ} V. Mazorchuk, K. Zhao, Simple Virasoro modules which are locally finite over a positive part, {\it Selecta Math. (N.S.)} {\bf 20}(3), 839--854 (2014).

\bibitem{MP} R. V. Moody, A. Pianola, Lie algebra with triangular decompositions,  Can. Math. Soc., John Wiley and Sons, 1995.

\bibitem{T} H. Tan,  Classification of simple locally finite modules over the affine-Virasoro algebras, {\it J. Algebra} {\bf 631}, 738--755 (2023).

\bibitem{TZ1} H. Tan, K. Zhao, Irreducible Virasoro modules from tensor products, {\it Ark. Mat.} {\bf 54}, 181--200 (2016).

\bibitem{TZ2} H. Tan, K. Zhao, Irreducible Virasoro modules from tensor products (II), {\it J. Algebra} {\bf 394}, 357--373 (2013).





\bibitem{XH} L. Xia, N. Hu, Irreducible representations for Virasoro-toroidal Lie algebras, {\it J. Pure Appl. Algebra}  {\bf 194},  213--237 (2004).

\bibitem{Z} H. Zhang, A class of representations over the Virasoro algebra, {\it J. Algebra}  {\bf 190}, 1--10 (1997).

\bibitem{ZX} X. Zhang, Tensor product representations of the Neveu-Schwarz algebra, {\it Commun. Algebra}  {\bf 43}(9), 3754--3775 (2015).

\end{thebibliography}
\end{document}